\newcommand{\email}[1]{\protect\href{mailto:#1}{#1}}
\renewcommand{\rev}[1]{#1}
\title{Solving Optimal Control Problems of Rigid-Body Dynamics with Collisions Using the Hybrid Minimum Principle}
\author{Wei Hu\thanks{Princeton University}
\and Jihao Long\thanks{Princeton University}
\and Yaohua Zang\thanks{Zhejiang University}
\and Weinan E\thanks{Peking University and Princeton University}
\and Jiequn Han\thanks{Flatiron Institute (\email{jiequnhan@gmail.com})}
}
\begin{document}

\maketitle

\begin{abstract} %
Collisions are common in many dynamical systems with real applications.
They can be formulated as hybrid dynamical systems with discontinuities automatically triggered when states transverse certain manifolds.
We present an algorithm for the optimal control problem of such hybrid dynamical systems
based on solving the equations derived from the hybrid minimum principle (HMP).
The algorithm is an iterative scheme following the spirit of the method of successive approximations (MSA), and it is robust to undesired collisions observed in the initial guesses. 
We propose several techniques to address the additional numerical challenges introduced by the presence of discontinuities.
The algorithm is tested on disc collision problems whose optimal solutions exhibit one or multiple collisions.
Linear convergence in terms of iteration steps and asymptotic first-order accuracy in terms of time discretization are observed when the algorithm is implemented with the forward-Euler scheme.
The numerical results demonstrate that the proposed algorithm has better accuracy and convergence than direct methods based on gradient descent. 
Furthermore, the algorithm is also simpler, more accurate, and more stable than a deep reinforcement learning method.

\end{abstract}

\begin{keywords}
Optimal control of hybrid systems,
Non-smooth and discontinuous optimal control problems,
Rigid-body collision,
Hybrid minimum principle, 
Pontryagin's minimum principle
\end{keywords}

\section{Introduction}%
\label{sec:introduction}
Discontinuities are ubiquitous in dynamical systems governing many applications, \eg robotics.
They can happen either internally as an intrinsic property of the dynamical system or be triggered by external control signals.
These mostly smooth systems with the presence of discrete discontinuous phenomena are called hybrid systems.
They can be classified according to the different kinds of hybrid phenomena present in the system \citep{framework94, framework98}.

In this paper, we study the optimal control of dynamical systems with discrete discontinuities and a Bolza-type objective.
Our interest lies in those discontinuities that are jumps of the state variables
and are triggered automatically when the states transverse certain manifolds (jumping manifolds).
These discontinuities are called \emph{autonomous state jumps}.
Rigid body collisions are models for these discontinuities, which are very common in robotics.

Another type of hybrid dynamical system that is widely studied in the literature is called \emph{controlled hybrid system},
\ie when and how the discontinuities affect the system are directly controlled by extra control variables.
Such hybrid phenomena are found in many applications, such as
impulsive control of satellite rendezvous \citep{controlled_app_satellite}, Chua's circuit \citep{controlled_app_circuit}, and hybrid vehicle \citep{hybid_vehicle,controlledHSBook}.
Most strategies for solving the optimal control problem of the hybrid system decouple the continuous control and the discrete control, including the switching times, switching sequence, jump magnitudes, \etc.
For a comprehensive review that focuses on controlled hybrid dynamics, we refer the readers to \cite{controlledHSBook} and the references therein. 

However, in systems with autonomous state jumps, these hybrid phenomena are tightly linked to the continuous control; thus, it is difficult to decouple the continuous control and the discrete phenomena. 
Compared with controlled hybrid systems, autonomous state jumps introduce extra nonlinearity into the problem due to the undetermined times and states at jumps.
Furthermore,
the total cost may change dramatically when the updated control triggers a new collision or avoids an existing collision in the solution process. 
These are all challenging issues for optimization.

Dynamic programming and trajectory optimization are two basic approaches for solving optimal control problems. Dynamic programming provides a global optimal closed-loop control based on Bellman’s principle of optimality \citep{bellman2003rand}. It suffers from the curse of dimensionality and generally does not scale to high dimensional systems. Trajectory optimization solves for a locally optimal trajectory and usually provides an open-loop solution.
Some methods lie in between, including differential dynamical programming (DDP, \cite{DDP}), iterative linear-quadratic regulator (iLQR, \cite{iLQR}),  sequential linear-quadratic control (SLQ, \cite{SLQ}), and iterative linear-quadratic-Gaussian method (iLQG, \cite{iLQG}).
These methods use linear or quadratic approximations around the nominal trajectory for the dynamics or costs to find a local closed-loop control.
Since these methods require derivatives of the dynamics, some smoothing techniques that sacrifice accuracy are necessary for their applications in discontinuous systems~\citep{humanoid,ddp_quadrupeds}.

In this paper, we will tackle the discontinuous model directly and find the open-loop optimal control.
There are two main categories of approaches for the open-loop optimal control problems: direct and indirect methods.
Direct methods turn the infinite-dimensional control problems into finite-dimensional nonlinear programming problems through discretization; thus, they are often described as discretize-then-optimize.
Being hybrid introduces extra difficulties. For example, the control problem of a frictional system is turned into a mixed-integer programming problem \citep{mixed_integer}.
In \cite{discontinuous_system}, it is pointed out that the discretize-then-optimize approach may lead to incorrect gradients.
Recently, differentiable physics simulation~\citep{Hu2020DiffTaichi,zhong2023improving} has also been investigated for solving open-loop optimal control problems involving discontinuities brought by contacts. However, accurately computing the associated gradients remains a challenge that calls for further research efforts~\citep{zhong2022differentiable}.
In \cite{autoAlgGradient,gradient_modified}, gradient-type algorithms are proposed for the autonomous hybrid systems where the adjoint equations are used to compute the gradients (with respect to the controls).
In this approach, the discontinuity information is implicitly reflected in the gradients and thus influences the continuous controls when applying iterative gradient-based algorithms.
Indirect methods use necessary conditions for the optimality of the control problem; they are often described as optimize-then-discretize. 
They turn the optimal control problem into a system of differential-algebraic equations. The necessary conditions are formulated for the continuous-time problem; thus, they have less to do with how the dynamical system is discretized. 
Pontryagin's minimum principle (PMP) \citep{pmp2} is a well-known set of first-order necessary conditions in optimal control.
Based on the hybrid maximum principle, an extension of PMP to the hybrid dynamical system, \cite{autoSplit} proposed a numerical algorithm (HMP-MAS, short for hybrid maximum principle multiple autonomous switchings) that handles the discontinuities explicitly.
For a (estimated) given number of switches and an initial guess of switching times and states,
HMP-MAS alternates between computing the optimal controls inside each continuous interval and updating switching times and states by gradient descent.
The constraints that switching states must be on the switching manifolds are enforced through the penalty method.  
This method was proposed for the switching system but can be extended to the state-jump system; see \Cref{sub:comparison_to_hmp_mas}.

\subsection{Proposed method and contributions
}
This paper proposes an iterative method for the open-loop optimal control of dynamical systems with autonomous state jumps (collisions).
We begin with a hybrid minimum principle (HMP) which extends the PMP to the hybrid dynamical systems with autonomous state jumps.
Following HMP and the method of successive approximation (MSA, \cite{msa2}), we propose a modified version that incorporates a relaxation technique to improve convergence.

We implement the algorithm with the forward-Euler scheme and numerically address several difficulties introduced by the discontinuities of the hybrid system. 
We propose several disc collision problems with one or multiple collisions and derive their analytical optimal solutions.
\rev{The method is then tested on these problems, demonstrating stable linear convergence with respect to the number of iteration steps (see \Cref{sub:convergence_rate} for a detailed study). Furthermore, asymptotic first-order accuracy is observed for the error with respect to the time step size.}

\rev{
The contributions of this paper are threefold.
First, we propose an iterative method based on MSA with a helpful relaxation term and demonstrate its ability to solve optimal control problems for rigid bodies with collisions both accurately and efficiently.
Second, we address several numerical challenges introduced by collisions, including the misalignment of discontinuities in the control and state variables, as well as the inherent discontinuities in the control itself.
These issues are critical for ensuring the stability and efficiency of the algorithm.
Third, we introduce a typical rigid body problems with collisions and provide analytical solutions, which can serve as a baseline problem for physics simulations and control tasks \citep{zhong2023improving, zhong2022differentiable}.
}

\rev{
As demonstrated in \Cref{sec:comparison}, a comparison with the HMP-MAS method, a direct method, and deep reinforcement learning approaches highlights several advantages of the proposed method. 
Unlike the HMP-MAS method, the algorithm automatically avoids unnecessary collisions and does not require an accurate prediction of the number of collision times.
It generally converges more rapidly and produces more accurate solutions than gradient-based direct methods.
In addition, it outperforms reinforcement learning approaches in both efficiency and accuracy.
}

\subsection{Organization and notations}%
\label{sub:notations}

This article is organized as follows. In \Cref{sec:formulation_and_assumptions}, we formulate the dynamical systems.
The hybrid minimum principle is presented in  \Cref{sec:the_pontryagin_s_maximal_principle_and_algorithm}.
\Cref{sec:discretization} introduces an HMP-based numerical algorithm and takes into account several numerical issues.
We demonstrate the performance of the proposed algorithm for several examples in \Cref{sec:experiments}.
Comparisons to the HMP-MAS, a direct method, and a deep reinforcement learning method are then discussed in \Cref{sec:comparison}.
Finally, we conclude in \Cref{sec:discussions_and_future_work}.

We use $\vert\bz\vert$ to denote the Euclidean norm for any $\bz$ in the Euclidean space.
For a scalar function $h=h(\bz): \bR^n \rightarrow \bR$, its derivative with respect to $\bz$ is a row vector $\partial_z h = h_z = [h_{z_1}, \cdots, h_{z_n}] $.
For a vector-valued function $\bh=[h_1, \cdots, h_{k}]^T: \bR^n \rightarrow \bR^k$, its Jacobian is defined as 
\[
\bh_z = 
\begin{bmatrix}
\partial_{z_1} h_1 &\cdots &\partial_{z_n} h_1 \\
\partial_{z_1} h_2 &\cdots &\partial_{z_n} h_2 \\
\vdots &\ddots &\vdots \\
\partial_{z_1} h_k &\cdots &\partial_{z_n} h_k \\
\end{bmatrix} \in \mathbb{R}^{k\times n}.
\]
In two dimensions, we also use $x$ and $y$ as subscripts to denote the $x$ and $y$ coordinates, respectively.
We define the $L^p$ norm for any vector-valued functions $\bh\in L^p(a,b;\bR^k)$ as
\begin{equation}
    \label{eq:def_lp_norm}
    \norm[L^p(a,b)]{\bh} = (\int_{a}^{b} \norm{\bh(t)}^p~dt)^{\frac{1}{p}}.
\end{equation}
When $p=\infty$, $\norm[L^\infty(a, b)]{\bh} = \norm[L^\infty(a, b)]{\norm{\bh}}$. When $(a, b) = (0, T)$, we also write $\norm[p]{\bh}=\norm[L^p(0,T)]{\bh}$.

\section{Problem formulation}%
\label{sec:formulation_and_assumptions}
\rev{
The time interval considered in this work is always fixed as $[0,T]$. We denote the state of the system by $\bx(\cdot)\in\bR^m$ and
the control by $\bu(\cdot)\in U\subset\bR^{m_u}$. We assume that $U$ is a compact convex set.
} Given the initial condition $\bx(0) = \bx_0$ and the control process $\bu(t)$, the evolution of state $\bx(t)$ is governed by piecewise smooth dynamics:
\begin{equation}
    \label{eq:forward_state}
    \dot{\bx}(t) = \bff(\bx(t), \bu(t)), t\in(\gamma_i, \gamma_{i+1}), i=0, \cdots, \NJp, 
\end{equation}
where $0=\gamma_0<\gamma_1<\cdots<\gamma_{\NJp}<\gamma_{\NJp+1}=T$.
For each $i= 1, \cdots, \NJp$, a jump function $g$ is applied to the state at time $\gamma_i$,
\begin{equation}
    \label{eq:forward_jump}
    \bx(\gamma_i^+) = \bfg(\bx(\gamma_i^-)).
\end{equation}
Here $\bx(\gamma_i^\pm)$ denote the value of $\bx$ at $\gamma_i$ evaluated from the right/left time interval, \ie, $\lim_{t\in (\gamma_i, \gamma_{i+1}), t\rightarrow \gamma_i}\bx(t)$ / $\lim_{t\in (\gamma_{i-1}, \gamma_{i}), t\rightarrow \gamma_i}\bx(t)$, respectively.

Note that the number of jumps $\NJp$ and the jump time $\gamma_i$ are not prescribed;  instead, they are functions of the control $\bu$.
They are determined by a ``collision detection'' function $\psi(\bx)\ge0$ that checks whether a jump of state occurs.
Namely,
\begin{equation}
    \label{eq:forward_collision}
    \psi(\bx(\gamma_i^-)) = 0,
\end{equation}
and  $\psi(\bx(\cdot)) > 0$ at $\gamma_i^+$ and all $t$ other than $\gamma_i^-$.
That is why we call such jumps as autonomous state jumps. We assume both $\psi$ and $\bfg$ are smooth in an open set containing $\{\bx: \psi(\bx) = 0\}$.

The optimal control problem is then formulated as
\begin{mini!}|l| {\rev{\bu(\cdot)\in U}}{J(\bu(\cdot))=\phi(\bx(T)) + \sum_{i=0}^{\NJp}\int_{\gamma_{i}}^{\gamma_{i+1}}L\,dt,} 
    {\label{eq:opt_problem}}{}  
    \addConstraint{\text{\cref{eq:forward_state,eq:forward_jump,eq:forward_collision}}.}
\end{mini!}
Here, $\phi$ is the terminal cost function, and $L=L(\bx(t),\bu(t))$ is the running cost function; both are smooth.
We assume that $\bff, \phi, L$ are time-independent and that $\bff, L$ do not switch across different intervals $(\gamma_i, \gamma_{i+1})$ in this paper for the simplicity of notations. 
The considered minimum principle and proposed numerical algorithm can work for time-dependent problems and switching systems straightforwardly.
See also \eg \cite{hmp_pak1} for theory for systems with autonomous jumps and switchings. \rev{For the wellposedness of the hybrid optimal control problem, we refer to \cite{goebel2019existence,ferrante2019certifying,altin2023regularity}.}

To ensure that the above model is well-posed, we shall require that the trajectory of the state $\bx$ transverses the manifold $\{\psi=0\}$ where jumps of the state take place. That is, the state $\bx$ should not move tangentially on the manifold, or 
\begin{equation}
\label{eq:transversality}
0\ne \frac{d}{dt}\psi(\bx(t)) = \psi_{x}(\bx(t))\dot{\bx}(t) = [\psi_{x} \bff]\vert_t.
\end{equation}
Here $\psi_{x}$ is the normal vector for the manifold and $\dot{\bx}=\bff$ is the instantaneous \rev{velocity} of the state $\bx$.
In the context of collisional rigid body dynamics, $\psi_{x} \bff$ is the relative velocity projected to the contact normal vector, which should be nonzero for the collision to take place. See Section \ref{sec:experiments} for a concrete example.

\section{Hybrid minimum principle}%
\label{sec:the_pontryagin_s_maximal_principle_and_algorithm}
Pontryagin's maximum principle for standard optimal control can be extended to the discontinuous dynamics introduced here.
We shall call this extension the hybrid minimum principle (HMP).
To begin with, we introduce the {\it Hamiltonian} $H:\bR^m\times \bR^m\times U\rightarrow\bR$
\begin{equation}
    \label{eq:hamiltonian}
    H(\bx, \blambda, \bu) = \blambda^T \bff(\bx,\bu) + L(\bx,\bu).
\end{equation}
\begin{theorem}[Hybrid Minimum Principle]
    Let the state-control pair $\{\bx(t), \bu(t)\}$ be the optimal solution associated with \eqref{eq:opt_problem},
    then there exists a costate process $\blambda(t): [0,T]\rightarrow\bR^m$ that satisfies
    \begin{equation}
        \label{eq:backward_costate}
        \dot{\blambda}(t) = -H_x^T(\bx(t),\blambda(t), \bu(t)), \quad t\in(\gamma_i, \gamma_{i+1}),\; 
    \end{equation}
    for $i=0, \cdots, \NJp$, with the terminal condition
    \begin{equation}
        \label{eq:backward_terminal}
        \blambda(T) = \phi^T_{x}(\bx(T)),
    \end{equation}
    and backward jump conditions, $i=1, \cdots, \NJp,$
    \begin{equation}
        \label{eq:backward_jump}
        \blambda^T(\gamma_i^-) = \blambda^T(\gamma_i^+)\bfg_{x}(x(\gamma_i^-)) + \eta_i\psi_{x}(\bx(\gamma_i^-)),
    \end{equation}
    where $\eta_i \in \bR$ satisfies the following equation (variation of collision time)
    \begin{equation}
        \label{eq:backward_eta}
        \begin{split}
            \blambda^T(\gamma_i^+) [\bfg_{x}(\bx(\gamma_i^-))\bff\vert_{\gamma_i^-} -\bff\vert_{\gamma_i^+}] &+ \eta_i
        \psi_{x}(\bx(\gamma_i^-))\bff\vert_{\gamma_i^-}\\
        &+L\vert_{\gamma_i^-} - L\vert_{\gamma_i^+} = 0.
        \end{split}
    \end{equation}
    Here, $h\vert_{s^{\pm}}$ denotes function $h$ evaluated at $s$ as limits from right and left, respectively.
    In addition, the optimal control $\bu$ minimizes the Hamiltonian for each $t\in[0, T]$:
    \begin{equation}
        \label{eq:minimal_condition}
        \bu(t) = \arg\min_{\bv\in U} H(\bx(t), \blambda(t), \bv).
    \end{equation}
    \label{thm:hmp}
\end{theorem}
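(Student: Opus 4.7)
My plan is to adapt the classical needle-variation derivation of Pontryagin's principle to the hybrid setting; the new ingredient is bookkeeping of how a perturbation of $\bu$ shifts each collision time $\gamma_i$. Fix an optimal pair $(\bx^*,\bu^*)$ with jump times $\gamma_i^*$, introduce a needle variation of $\bu^*$ on an interval $[\tau-\epsilon,\tau]$ (with $\tau$ a regular point of $\bu^*$, not coinciding with any $\gamma_i^*$), and track the resulting first-order state variation $\delta\bx$. Away from collisions $\delta\bx$ obeys the variational equation $\dot{\delta\bx}=\bff_x\,\delta\bx$; the nontrivial step is the matching condition at each $\gamma_i^*$.

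Since the perturbed trajectory must still satisfy $\psi=0$ at its (shifted) collision time, implicit differentiation of \eqref{eq:forward_collision} together with the transversality \eqref{eq:transversality} yields
\[
\delta\gamma_i = -\frac{\psi_{x}(\bx^*(\gamma_i^-))\,\delta\bx(\gamma_i^-)}{\psi_{x}\bff\,\vert_{\gamma_i^-}}.
\]
Evaluating the perturbed flow at the perturbed jump time, applying $\bfg$, and pulling everything back to the reference time $\gamma_i^*$, one obtains
\[
\delta\bx(\gamma_i^+) = \bfg_{x}\,\delta\bx(\gamma_i^-) + \bigl(\bfg_{x}\bff\vert_{\gamma_i^-}-\bff\vert_{\gamma_i^+}\bigr)\,\delta\gamma_i,
\]
and the shifted integration limits in $J$ add an extra running-cost contribution $(L\vert_{\gamma_i^-}-L\vert_{\gamma_i^+})\,\delta\gamma_i$ at each jump.

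Next I would introduce the costate $\blambda$ and use the identity $\frac{d}{dt}(\blambda^T\delta\bx)=-L_x\,\delta\bx+\blambda^T\bff_u\,\delta\bu$, which is equivalent to $\dot\blambda=-H_x^T$ on each smooth arc. Integrating by parts reduces $\delta J$ to a boundary term at $T$ killed by $\blambda(T)=\phi_x^T$, a volume integral in $\delta\bu$, and, at each $\gamma_i^*$, the bracket
\[
\bigl[\blambda^T(\gamma_i^-)-\blambda^T(\gamma_i^+)\bfg_{x}\bigr]\,\delta\bx(\gamma_i^-) + C_i\,\delta\gamma_i,
\]
where $C_i = L\vert_{\gamma_i^-}-L\vert_{\gamma_i^+}-\blambda^T(\gamma_i^+)\bigl(\bfg_{x}\bff\vert_{\gamma_i^-}-\bff\vert_{\gamma_i^+}\bigr)$. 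Substituting the formula for $\delta\gamma_i$ renders the whole contribution linear in the free vector $\delta\bx(\gamma_i^-)$; its vanishing is equivalent to the existence of a scalar $\eta_i$ satisfying the jump relation \eqref{eq:backward_jump}, while the $\delta\gamma_i$ coefficient rewrites as \eqref{eq:backward_eta}. The remaining $\delta\bu$ integral, combined with the standard needle-variation density argument, then produces the pointwise Hamiltonian minimum \eqref{eq:minimal_condition}.

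The main obstacle is precisely the collision bookkeeping: the values $\delta\bx(\gamma_i^\pm)$ are reference-time limits while $\bfg$ is applied at the perturbed time, so one must carefully expand the flow on both sides of $\gamma_i^*$ and retain all $O(\delta\gamma_i)$ corrections in both the state and the cost. A secondary subtlety is uniformity---the transversality \eqref{eq:transversality} must hold in a neighborhood of the nominal trajectory so that $\delta\gamma_i$ remains bounded uniformly in $\epsilon$ and the linearization survives the limit $\epsilon\to 0^+$. With these two points under control, the classical PMP machinery (continuous dependence of the flow on the control, Lebesgue density for $\tau$) closes the argument without further modification.
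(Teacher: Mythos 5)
Your sketch is correct in outline, and it is worth noting that the paper itself does not prove Theorem \ref{thm:hmp} at all --- it defers to the reference \cite{volin1969maximum}, remarking only (Remark \ref{rem:hu_implicit}) that most proofs of PMP/HMP proceed by calculus of variations with special control variations. Your derivation is exactly that standard route, and your key computations check out: the implicit-function formula $\delta\gamma_i = -\psi_x\delta\bx(\gamma_i^-)/(\psi_x\bff)\vert_{\gamma_i^-}$ is the correct first-order collision-time shift under the transversality condition \eqref{eq:transversality}; the matching condition $\delta\bx(\gamma_i^+) = \bfg_x\,\delta\bx(\gamma_i^-) + (\bfg_x\bff\vert_{\gamma_i^-}-\bff\vert_{\gamma_i^+})\delta\gamma_i$ and the cost correction $(L\vert_{\gamma_i^-}-L\vert_{\gamma_i^+})\delta\gamma_i$ are right; and substituting $\delta\gamma_i$ into your bracket makes the jump contribution a rank-one correction along $\psi_x$, which is precisely the content of \eqref{eq:backward_jump} with $\eta_i$ determined by \eqref{eq:backward_eta} (your $C_i\,\delta\gamma_i$ coefficient is, up to sign convention, the Hamiltonian-constancy condition noted in Remark \ref{rem:transversality}). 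Two points deserve more care than your sketch gives them. First, your ``volume integral in $\delta\bu$'' phrasing is a weak-variation argument and by itself only yields stationarity $H_u=0$; to obtain the full minimum condition \eqref{eq:minimal_condition} for general nonconvex $U$ you must propagate the $O(1)$ needle variation through each subsequent jump --- i.e., verify that the state perturbation it creates still satisfies your linear variational equation and your jump-matching map at each $\gamma_i$, so that $\delta J = \epsilon[H(\bx,\blambda,\bv)-H(\bx,\blambda,\bu^*)]\vert_\tau + o(\epsilon)\ge 0$; you allude to this but it is where the real work lies. Second, you need the perturbed trajectory to have the same number of collisions, each a transversal crossing near $\gamma_i^*$; local persistence follows from the implicit function theorem under \eqref{eq:transversality}, but ruling out spurious new crossings for small $\epsilon$ requires an argument of the flavor the paper later formalizes as stability of collision (\Cref{thm:only_one_collision_with_estimate}), not merely transversality at the nominal crossing. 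With those two steps filled in, your argument is a legitimate self-contained proof of the theorem the paper imports from the literature.
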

We refer to \cite{volin1969maximum} for a proof of this theorem.
A few remarks on PMP and HMP
are in order.
\begin{remark}
\label{rem:transversality}
By the assumption of transversality \cref{eq:transversality}, we have $\psi_{x} \bff\ne 0$ when $\psi=0$. Therefore the equation \cref{eq:backward_eta} is always non-degenerate.
We note that the condition \cref{eq:backward_eta} from the variation of collision time is equivalent to the Hamiltonian constancy condition quoted in other literature, i.e. $H\vert_{\gamma_i^-}=H\vert_{\gamma_i^+}$. 
\end{remark}
\begin{remark}
We have omitted the abnormal multiplier in the statement of Theorem \ref{thm:hmp} for brevity.
In a more complete form, there is a technicality involving multiplier $\lambda_0\ge0$ in the Hamiltonian \eqref{eq:hamiltonian} $H=\blambda^T\bff + \lambda_0L$ and in the terminal condition \eqref{eq:backward_terminal} $\blambda(T) = \lambda_0\phi^T_{x}(\bx(T))$.
Besides, the $\lambda_0$ also presents in front of  $L\vert_{\gamma^-_i}, L\vert_{\gamma^+_i}$ in \cref{eq:backward_eta}, which can be seen from its equivalence to $H\vert_{\gamma_i^-}=H\vert_{\gamma_i^+}$.
In the case where $\lambda_0 =0$ is the only candidate, the problem is singular and, in some sense, ill-posed \citep{optimal_control_book}.
Otherwise, we can rescale the costate $\blambda$ so that we can always let $\lambda_0=1$ (note that the additional $\eta_i$ in \cref{eq:backward_jump,eq:backward_eta} does not prevent $\blambda$ from rescaling).
We shall only consider this case and take $\lambda_0=1$ in this paper.
\end{remark}

\begin{remark}
\label{rem:hu_implicit}
Most proofs of PMP and HMP resort to the calculus of variations with some special classes of control variations. 
We note that, 
$H_{u}$ can be interpreted as the Fr\'echet derivative of the reduced total cost $\hat{J}$ with respect to control function $\bu$ in $L^2$, where
\[
\hat{J}(\bu(\cdot)) = \phi\left(\bx\left(T; \bu(\cdot)\right)\right) + \sum_{i=0}^{\NJp}\int_{\gamma_i}^{\gamma_{i+1}}L(\bx(t; \bu(\cdot)), \bu(t))\,dt,
\]
and $\bx(t; \bu(\cdot))$ denotes the trajectory obtained with control $\bu(\cdot)$ according to \cref{eq:forward_state,eq:forward_jump,eq:forward_collision}.
This argument and its proof is often a step in the complete proof of the PMP and HMP.
For a formal derivation, see Chapter 6.2 of \cite{optimal_control_book3}. For a rigorous proof, see the proof of necessary conditions of optimality in Chapter 1.4 of \cite{optimal_control_book2} on continuous dynamics and
see those proofs of the HMP (\eg \cite{volin1969maximum}) for the present case.
\end{remark}

\begin{remark}
The PMP/HMP is related to the Karush-Kuhn-Tucker (KKT) conditions for constrained optimization problems.
In fact, the dynamic equations of $\bx$ (\cref{eq:forward_state,eq:forward_jump,eq:forward_collision}) can be viewed as an infinite-dimensional constraint. The costate $\blambda$ plays the role of a continuous-time analogy of the Lagrange multiplier.
The HMP/PMP is stronger than the KKT conditions in the sense that it asserts that $H$ is not only stationary but also minimized at the optimal control (\cref{eq:minimal_condition}).
Like KKT, the PMP/HMP is only a set of necessary conditions.
However, these sets of necessary conditions are often sufficient to provide a good (even optimal) solution given a reasonable initial guess. 
\end{remark}

\begin{remark}
HMP has been studied extensively in the literature.
In \cite{hmp_suss,hmp_hnp}, HMP has been formulated and proved for a general framework of the hybrid dynamic systems.
The HMP is proved by reducing the optimal control problem of the hybrid system to the canonical Pontryagin type in \cite{hmp_as_pmp}. 
Adding pathwise inequality constraints to the control problem will append related complementary equations to the resulting HMP \citep{hmp_pathwise_constraints}.
In \cite{hmp_impulsive_controlled_magnitude}, the HMP has been derived for autonomous hybrid dynamical systems whose jump magnitudes are additional control variables. The HMP has also been extended to manifolds where the costate is a trajectory on the cotangent bundle \citep{hmp_manifold}.   
\end{remark}

\section{Algorithm}%
\label{sec:discretization}
In this section, based on the HMP in \Cref{thm:hmp} and the MSA algorithm, 
we develop an improved version for better numerical convergence.

Prior to the development of numerical algorithms, the jump function of the costate requires extra clarification.
Equations \eqref{eq:backward_jump} and \eqref{eq:backward_eta} are condensed into a jump function $G$ for simplicity as follows:
\begin{equation}
\label{eq:backward_jump_form1}
\blambda(\gamma_i^-) = \bfG(\blambda(\gamma_i^+),\bx(\gamma_i^+), \bx(\gamma_i^-), \bu(\gamma_i^+), \bu(\gamma_i^-)).
\end{equation}
Note that $G$ depends on $\bx(\gamma_i^+)$, $\bu(\gamma_i^+)$, and $\bu(\gamma_i^-)$, due to $\eta$.
Here $\bu(\gamma_i^{\pm})$ refers to the after/before-jump values of $\bu$.
To clarify the semantic meaning of the after/before-jump values, we introduce $\beta_i$, which denotes the $i$-th discontinuous point of $\bu$.
For the optimal solution, we have $\gamma_i = \beta_i$.
However, when $\bu$ approaches optimality, it is crucial to note that the locations of the discontinuities can be different, \ie, $\gamma_i\ne\beta_i$, even though the number of $\beta_i$ equals that of $\gamma_i$.
It implies that $\bu(\gamma_i^{\pm})$ is not a stable quantity; it might change discontinuously when $\gamma_i$ leaps over $\beta_i$.
We shall use $\bu(\beta_i^\pm)$ instead in designing numerical algorithms. This is also reflected later in \cref{eq:control_resolve_steepest} for approximating $\beta_i$.
Therefore, we rewrite equation \eqref{eq:backward_jump_form1} as
\begin{equation}
    \label{eq:backward_jump_form2}
    \blambda(\gamma_i^-) = \bfG(\blambda(\gamma_i^+),\bx(\gamma_i^+), \bx(\gamma_i^-), \bu(\beta_i^+), \bu(\beta_i^-)).
\end{equation}
Next, we will review the MSA algorithm and develop a modified version using the formula \eqref{eq:backward_jump_form2} instead.

At a high level, given a control, the forward equations \eqref{eq:forward_state} to  \eqref{eq:forward_collision} for the state $\bx(\cdot)$
and the backward equations 
\eqref{eq:backward_costate}, \eqref{eq:backward_terminal} and \eqref{eq:backward_jump_form2} for the costate $\blambda(\cdot)$
are satisfied by solving the corresponding forward and backward problems, respectively.
It remains to find the optimal control $\bu(\cdot)$ such that \eqref{eq:minimal_condition} is also satisfied.
To this end, we propose an iterative method. In the $k$-th iteration, given the control $\bu^k(\cdot)$, we compute $\bx^k(\cdot), \blambda^k(\cdot)$ by solving the forward and backward dynamics, respectively.
We then update the control using the information from minimizing the Hamiltonian \eqref{eq:minimal_condition}.
To be specific, we define the minimizer $\bu_{*}^{k+1}(\cdot)$ in the $k$-th iteration by
\begin{equation}
	\label{eq:pmp_max_u}
	\bu_{*}^{k+1}(t) = \arg\min_{\bv\in U } H(\bx^k(t), \blambda^k(t), \bv), \quad \forall t\in[0,T].
\end{equation}
If we update $\bu^{k+1} = \bu^{k+1}_{*}$, we arrive at the method of successive approximations (MSA)~\citep{msa2}.
To enhance the applicability and robustness of the algorithm in numerical computation, we introduce a relaxed version to update $\bu^{k+1}$ (see \eqref{eq:u_relax_update}) in Section~\ref{sec:relaxed_MSA}.

\subsection{The relaxed MSA algorithm}
\label{sec:relaxed_MSA}
Even for optimal control problems without discontinuities, the MSA method only converges for a restricted class of linear systems and diverges for simple linear quadratic optimal control problems~\citep{aleksandrov1968accumulation}.
One reason is that the control $\bu_{*}^{k+1}$ may stray too far from the region of validity approximated at $\bu^k$, and the total objective may even increase (see~\cite{msa3} for a more detailed explanation).
In \cite{msa3}, an extended version of the method of successive approximations is proposed to penalize the deviations from the dynamic constraints on the state and costate, \ie, $\dot{\bx}^k - H_{\lambda}(\bx^k,\blambda^k,\bu^{k+1})$ and $\dot{\blambda}^k+H_{x}(\bx^k,\blambda^k,\bu^{k+1})$.
In this paper, we propose to adopt the following update scheme with relaxation:
\begin{equation}
	\bu^{k+1}(t) = (1-\alpha) \bu^{k}(t) + \alpha \bu_{*}^{k+1}(t), \quad \forall t\in[0,T]
	\label{eq:u_relax_update}
\end{equation}
where $\alpha\in(0,1]$ is a relaxation parameter.
We shall use a small $\alpha$ in \cref{eq:u_relax_update} to restrict the update of the control. \rev{Note that since both $\bu^{k}(t)$ and $\bu_{*}^{k+1}(t)$ are in the compact convex set $U$, the update $\bu^{k+1}(t)$ also remains inside the admissible control set $U$ as a convex combination.}

We now discuss the numerical implementation of the iterative scheme presented above.
Each iteration consists of the following four steps:
\begin{enumerate}
	\item
	    Using the control from the last iteration, we numerically simulate the \emph{forward dynamics} \cref{eq:forward_state,eq:forward_jump,eq:forward_collision} utilizing the forward-Euler scheme with the collision times prediction.

	\item
      Solve the \emph{backward dynamics} \cref{eq:backward_costate,eq:backward_terminal,eq:backward_jump,eq:backward_eta} using the forward-Euler scheme (backward in time) and collision information from the forward simulation.
	\item
    \emph{Update the control} pointwise in time according to \cref{eq:pmp_max_u,eq:u_relax_update} with the newly computed state and costate.
	\item \rev{\emph{Test the convergence} by evaluating the $L^2$ norm of the Hamiltonian \cref{eq:hamiltonian}, and return the current solution if the stopping criterion is satisfied.}
\end{enumerate}
In the remaining of this section, we share details on the numerical discretization of each of these steps, particularly emphasizing challenges brought by discontinuities.
We begin by discretizing the time interval $[0, T]$ into $N$ uniform sub-intervals: $t_n = n\Delta t,$ $n=0,\cdots, N,$ $\Delta t = \frac{T}{N}$.
For simplicity, we omit the iteration superscript $k$ in our subsequent discussion of the discretization.
$\bx_n, \blambda_n, \bu_n$ denote the numerical approximations of the state, costate, and control at $t_n$, respectively.
A summary of the entire algorithm is in \Cref{alg:PMP}.

\begin{algorithm}[ht!]
	\caption{The relaxed MSA algorithm for solving optimal control with hybrid dynamics}
	\label{alg:PMP}
	\begin{algorithmic}[0]
		\State
		\textbf{Input:}
		the number of discretization time intervals $N$, the initial guess of the control $\bu^0 = [\bu_0^0, \bu_1^0, \cdots, \bu_{N-1}^0]$, \rev{the error tolerance $\delta>0$}.
		\State
		$k\gets 0$
		\While{True}
		\Program{Forward}
		\State \multiline{
		Plug $\bu^k$ into \cref{eq:fd_collision_time,eq:fd_case1,eq:fd_case2} to get $\bx^k=[\bx_0^k, \bx_1^k, \cdots, \bx_N^k]$,
		the active collision indexes set $\cC$ defined in \cref{eq:fd_active_collision_index},
		and the states $\bx_{c+\frac{1}{2}}^{\pm}$ around collision for $c\in\cC$.}
		\EndProgram
		\Program{Backward}
		\State \multiline{
		Using computed $\bx^k$ and derived information around collisions,
		obtain $\blambda^k=[\blambda_1^k, \cdots, \blambda_N^k]$
		by solving the backward dynamics with jumps described in \cref{eq:bw_dis_no_jump,eq:bw_dis_jump}.
		The control values $\bu_{c+\frac{1}{2}}^{\pm}$ in \cref{eq:bw_dis_jump} are computed by combining \cref{eq:control_resolve_steepest,eq:control_resolve_stable}.
		}
		\EndProgram
		\Program{Control Update}
		\State \multiline{
		Get control $\bu^{k+1}=[\bu^{k+1}_0, \cdots, \bu^{k+1}_{N-1}]$
		according to \cref{eq:pmp_max_u_dis,eq:u_relax_update_dis}.}
		\EndProgram

		\Program{Convergence Test}
		\State \multiline{
			Compute discrete $L^2$ norm of $H_{u}$ according to \cref{eq:dis_convergence_l2_norm}.
			If it is less than $\delta$, return the control $\bu^k$.
		}
		\EndProgram
		\State
		$k\gets k+1$ \;
		\EndWhile

		\State \textbf{Output:} the optimal control $\bu^k$
	\end{algorithmic}
\end{algorithm}

\paragraph*{Forward dynamics}
The numerical simulation of a hybrid dynamical system with discontinuities is not a simple task, and there are many related works in the literature \citep{forward_int_event_est1,forward_int_event_est2, forward_int_event_est3, forward_int_time_step1, forward_int_relax,forward_int_analysis1,forward_int_analysis2}.
The key idea involves an accurate estimation of the collision time using a root-finding scheme and a proper integration step around the collision by a variable-step integrator.
Our integrator estimates the collision time based on a linear approximation and adds an intermediate time step at the estimated time.
To be specific, at each step, we estimate the collision time by solving the equation of $s_n=s(\bx_n, \bu_n)$:
\begin{equation}
	s_n=\inf \{s > 0: \psi(\bx_n+s\bff(\bx_n, \bu_n)) = 0\}.
\label{eq:fd_collision_time}
\end{equation}
Here, $s_n=\infty$ if no collision is predicted from the current state $x_n$ with control $u_n$.
The forward dynamics is simulated as follows. For $n=0, \cdots, N-1$, if $s_n = s(\bx_n, \bu_n) < \Delta t$, an extra integration step at $t_n+s_n$ is inserted and the jump function $\bfg$ is applied:
\begin{equation}
    \label{eq:fd_case1}
    \begin{cases}
        \bx_{n+\frac{1}{2}}^- &= \bx_n + s_n \bff(\bx_n, \bu_n), \\
            \bx_{n+\frac{1}{2}}^+ &= \bfg(\bx_{n+\frac{1}{2}}^-), \\
            \bx_{n+1} &= \bx_{n+\frac{1}{2}}^+ + (\Delta t - s_n) \bff(\bx_{n+\frac{1}{2}}^+, \bu_{n+\frac{1}{2}}^+);
   \end{cases}  
\end{equation}
if $s_n = s(\bx_n, \bu_n) \ge \Delta t$,
    \begin{equation}
        \label{eq:fd_case2}
        \bx_{n+1} = \bx_n + \Delta t \bff(\bx_n, \bu_n).
\end{equation}
Here, $\bu_{n+\frac{1}{2}}^+= \bu_{n+1}$ if we only have a discrete sequence of the control.
In practice, estimating the collision time adds minimal complexity to the simulation. For most integration steps, where a collision does not occur within a given time step, a rapid and approximate calculation of $s_n$ is adequate.
We note that the estimation of the collision time does not add much complexity to the simulation in practice. At most integration steps $t_n$ where no collision happens within a time step, a fast but rough estimation of $s_n$ is sufficient.
Along with the forward simulation, we collect the active collision index set $\cC$ as
\begin{equation}
	\cC = \{n: s_n<\Delta t\}.
\label{eq:fd_active_collision_index}
\end{equation}

\paragraph*{Backward dynamics}
The discretization of the backward costate equations (\cref{eq:backward_costate,eq:backward_terminal,eq:backward_jump_form2}) is as follows.
With the terminal condition $\blambda_{N} = \phi_{x}^T(\bx_N, t_N),$ for $n = N-1, N-2, \cdots, 1$, we have
\begin{equation}
	\blambda_{n} = \blambda_{n+1} + \Delta t H_{x}^T(\bx_{n}, \blambda_{n+1}, \bu_{n}), \quad \text{ if } n\notin \cC,
	\label{eq:bw_dis_no_jump}
\end{equation}
and if $n\in\cC$,
\begin{subequations}
	\begin{align}
		\blambda_{n+\frac{1}{2}}^+ & = \blambda_{n+1} + (\Delta t - s_{n}) H_{x}^T(\bx_{n+\frac{1}{2}}^+, \blambda_{n+1}, \bu_{n+\frac{1}{2}}^+), \label{eq:bw_dis_jump_p1}                                                                               \\
  \blambda_{n+\frac{1}{2}}^- &= \bfG(\blambda_{n+\frac{1}{2}}^+,\bx_{n+\frac{1}{2}}^+, \bx_{n+\frac{1}{2}}^-, \bu_{n+\frac{1}{2}}^+, \bu_{n+\frac{1}{2}}^-)),
  \label{eq:bw_dis_jump_jump} \\
		\blambda_{n}               & = \blambda_{n+\frac{1}{2}}^- + s_{n}H_{x}^T(\bx_{n}, \blambda_{n+\frac{1}{2}}^-, \bu_{n}) \label{eq:bw_dis_jump_p2}.
	\end{align}
	\label{eq:bw_dis_jump}%
\end{subequations}
Here, the function jump function $G$ is defined in \cref{eq:backward_jump_form2}.
In practice, the discontinuities will introduce numerical instability, especially when solving \cref{eq:bw_dis_jump_jump}.
The absence of $\Delta t$ in \cref{eq:bw_dis_jump_jump} means that for $c\in\cC$, an error of $O(1)$ in $\bu^{\pm}_{c+\frac{1}{2}}$ will result in an error of $O(1)$ in $\blambda_c$, and thus in all $\blambda_k$ for $k\le c$.
Therefore, some extra numerical treatments are needed in order to ensure numerical stability and convergence speed when estimating $\bu_{c+\frac{1}{2}}^{\pm}$ in evaluating \cref{eq:bw_dis_jump}.
The following two paragraphs are devoted to a better estimation of $\bu_{c+\frac{1}{2}}^{\pm}$.

Ideally, the discontinuities of the optimal control and collisions should occur simultaneously.
However, as addressed in replacing \cref{eq:backward_jump_form1} by \cref{eq:backward_jump_form2}, in the algorithm with time discretization, they can misalign before convergence. During iterations, we observe significant delays in the discontinuities of the control in response to the shifting discontinuities in the state $\bx$ due to the relaxed updating rule of $\bu$ in \cref{eq:u_relax_update}.
By \cref{eq:backward_jump_form2},
instead of using the control values $\bu_{c+1}$ and $\bu_{c}$ that are closest to the collision (which can be viewed as a numerical approximation of $\bu(\gamma_i^{\pm})$), we propose to use the steepest varying control values near the collision (which can be viewed as a numerical approximation of $\bu(\beta_i^{\pm})$).
To be specific, we find
\begin{equation}
	c^* = \arg\max_{\vert n -c\vert<l} \vert\bu_{n}-\bu_{n+1}\vert,
	\label{eq:control_resolve_steepest}
\end{equation}
where $l$ is a given integer. Then $t_{c^*}$ can be understood as the numerical resolution of the discontinuous point $\beta_i$ of $\bu$.
The information around $t_{c^*}$ will be used to determine $\bu_{c+\frac{1}{2}}^+, \bu_{c+\frac{1}{2}}^-$, as explained in the next paragraph.
In practice, we choose $l= \lceil \tau N \rceil,  \tau\in(0, 1)$ so that the range in which we search for the discontinuity is independent of the discretization.
Namely, we resolve better control estimations locally in $(t_c-\tau T, t_c+\tau T)$, where $t_c=c\Delta t$.
\rev{
Any value of $\tau$ such that $\tau T$ is less than the shortest time between adjacent collisions and larger than the misalignment $\gamma_i - \beta_i$ will be suitable.
}
$\tau=0.05$ is chosen in this work.

In our experiments, critical oscillations were observed in the solution near collision points, primarily due to the discontinuous nature of the control in these regions.
To mitigate this, we propose a resolution strategy that extrapolates using control values located further from the collision points. A comprehensive analysis of the oscillation and the approach is detailed in \Cref{app:stable_estimate_near_collision}.
In this paper, we utilize a straightforward first-order extrapolation:
\begin{equation}
	\label{eq:control_resolve_stable}
	\bu_{c+\frac{1}{2}}^+ = \bu_{c^*+2}, \quad \bu_{c+\frac{1}{2}}^- = \bu_{c^*-1}.
\end{equation}
\paragraph*{Updating the control}
It remains to discretize the equation \eqref{eq:pmp_max_u}.
From the discretization of $\bx, \blambda$ described above, we obtain $\bx_0^k, \cdots, \bx_N^k$ and $\blambda_1^k, \cdots, \blambda_N^k$ in the $k$-th step by using $\bu^k$, and we need to update the control at $t_0, \cdots, t_{N-1}$.
Note that the control at $t_{N}=T$ is not used in the discretized forward and backward dynamics; thus, we do not need to update it.
For each $\bu_n$ to be updated, the costate information should be backpropagated from $\blambda_{n+1}$.
So we apply the following rule at each time step $n$,
\begin{equation}
	\label{eq:pmp_max_u_dis}
	\bu_{*, n}^{k+1} = \arg\min_{\bv\in U} H(\bx_n^k, \blambda_{n+1}^k, \bv),\quad n=0, \cdots, N-1.
\end{equation}
We then discretize \cref{eq:u_relax_update} pointwisely:
\begin{equation}
	\bu^{k+1}_n = (1-\alpha) \bu^{k}_n + \alpha \bu_{*, n}^{k+1},\quad n=0,\cdots,N-1.
	\label{eq:u_relax_update_dis}
\end{equation}
\Cref{eq:pmp_max_u_dis} defines a finite-dimensional optimization problem for each $n$, which can be solved in parallel.
In cases like the examples considered in this paper, there are analytical expressions for the minimizers of the Hamiltonian.
If $H$ is convex in $\bu$ for fixed $\bx,\blambda$, many efficient convex optimization algorithms can be used.
Otherwise, a general numerical optimizer can also perform well with the initial guesses from the last iterations and adjacent grids.

It is worth mentioning that the selection of indices in \cref{eq:pmp_max_u_dis} is consistent with the discretize-then-optimize approach in terms of domains of dependence. Specifically, the control variable $\bu$ at index $n$ is dependent on the state variable $\bx$ at index $n$ and the costate variable $\blambda$ at index $n+1$. To demonstrate, consider the following discrete problem without any collision:
\begin{mini!}|l| {\bu_0, \cdots, \bu_{N-1}}{\phi(\bx_N) + \sum_{i=0}^{N-1}L(\bx_i, \bu_i) \Delta t,}
{\label{eq:opt_problem_dis}}{}
\addConstraint{\bx_{n+1} = \bx_n + \Delta t \bff(\bx_n, \bu_n).}
\end{mini!}
Introducing a Lagrange multiplier $\blambda_n$ for the dynamic constraint at each time step and then deriving the first-order necessary condition yields
\begin{equation}
	\label{eq:discrete_opt_condition}
	H_{u}(\bx_{*,n}, \blambda_{*,n+1}, \bu_{*,n})=0,
\end{equation}
where the subscript $*$ denotes the optimal solution.
We can see that the time indexes in \cref{eq:discrete_opt_condition} are consistent with those in \cref{eq:pmp_max_u_dis}.

\paragraph*{Convergence test}
Since the forward dynamics of $\bx$ and the backward dynamics of $\blambda$ are always solved accurately in the previous steps, we only need to check the Hamiltonian minimization condition \cref{eq:minimal_condition} to determine if convergence is achieved.
The $L^2$ norm of $H_{u}$ (\cref{eq:minimal_condition}) serves as a natural convergence indicator.
As pointed out in Remark \ref{rem:hu_implicit}, $H_{u}$ also has the meaning of the derivative of the total cost with respect to the control $\bu$.
Given $\bu^k=[\bu^k_0, \cdots, \bu^k_{N-1}]$, $H_{u}$ is discretized as
\begin{equation}
	\label{eq:hu_dis}
	H_{u,n}^k = H_{u}(\bx^k_n, \blambda^k_{n+1}, \bu^{k}_n), \; n=0,\cdots, N-1.
\end{equation}
The indexes relation follows the same logic as in \cref{eq:pmp_max_u_dis}.
We then compute the numerical $L^2$ norm as
\begin{equation}
	\label{eq:dis_convergence_l2_norm}
	\sqrt{\sum_{n=0}^{N-1} (H_{u,n}^k)^2\Delta t}.
\end{equation}
We stop the algorithm when the above quantity is less than a pre-specified tolerance number $\delta$.

\section{Experiments}%
\label{sec:experiments}
This paper mainly focuses on those hybrid dynamics whose difficulties are introduced by collisions.
We will demonstrate the performance of Algorithm \ref{alg:PMP} in this scenario by applying it to a system of two-dimensional, homogeneous discs which characterized solely by their masses and radii, in frictionless rigid body dynamics.
We remove the degrees of freedom associated with the orientations and angular velocities as these quantities are constant in this context.

There are always two discs in our experiments.
The state $\bx$ is composed of the positions and velocities of the two discs, \ie $\bx\in\bR^8.$ 
The objective is to maneuver disc 1 by applying a force that causes it to collide with disc 2, aiming to position disc 2 near predetermined locations at terminal time $T$.
Due to the translation invariance, the prescribed terminal location is always set to the origin $(0, 0)$.

Let us write $\bx = [\bx_{q_1}^T, \bx_{q_2}^T, \bx_{v_1}^T, \bx_{v_2}^T]^T$, where $\bx_{q_i}, \bx_{v_i}$ are the position and the velocity of disc $i$, respectively. Similarly, the costate is written as $\blambda = [\blambda_{q_1}^T, \blambda_{q_2}^T, \blambda_{v_1}^T, \blambda_{v_2}^T]^T$. The control objectives include a terminal cost $\phi$, quantifying the distance between disc 2 and the terminal location $(0,0)$ as $\phi(\bx) = \vert\bx_{q_2}\vert^2 $,
and a running cost $L$, expressing the square of the magnitude of the applied force,
\begin{equation}
    \label{eq:eg_running_cost}
    L(\bx, \bu) = \epsilon \vert\bu\vert^2,
\end{equation}
where $\epsilon>0$ is a fixed parameter.
The admissible control set $U$ is $\bR^2$.
We also explore non-convex running costs (see \Cref{sub:L1-cost}).
Detailed descriptions of the dynamics, costate dynamics, collision detection, and jump functions for the two-disc system are in \Cref{app:details_of_exp}.

Like most iterative algorithms in nonlinear optimal control, a suitable initial guess is necessary to avoid undesired local minima.
This is more demanding for the discontinuous systems considered in this paper.
For instance, in our disc collision problems, the terminal cost only depends on disc 2, and the control only accelerates disc 1. If the initial control does not trigger a state jump of disc 2, any local optimization algorithm is likely to converge to zero control ($\bu = 0$) at which the gradient of the total cost with respect to the control vanishes (see Remark \ref{rem:hu_implicit} in \Cref{sec:the_pontryagin_s_maximal_principle_and_algorithm}).
To avoid this and facilitate the convergence to the global minimum, we choose the initial controls so that all the desired collision will be encountered.
Details are provided in each subsection below.

To describe and quantify the performance of the algorithm,
we define the $L^2$ semi-norm of a square integrable function $l(\cdot):[0,T]\rightarrow \bR$ as,
\begin{equation}
    \vert\vert l \vert\vert_2 = [\frac{1}{T-2\omega}
    (\int_{0}^{t_c - \omega} l^2(t)~dt+\int_{t_c+\omega}^{T} l^2(t)~dt)]^{1/2},
    \label{eq:norm_of_scalar}
\end{equation}
when there is only one collision in the optimal trajectory and $t_c$ denotes the corresponding collision time.
In the integral, we exclude a small interval with length $2\omega$ around the collision time.
When there are multiple collisions, the semi-norm can be defined similarly.
For a vector-valued function $\bh=[h_x,h_y]^T$,
we define its $L^2$ semi-norm as
\begin{equation}
    \vert\vert\bh\vert\vert_2 = (\vert\vert h_x\vert\vert_2^2 + \vert\vert h_y\vert\vert_2^2)^{1/2}.
    \label{eq:norm_of_vector}
\end{equation}
Note that we always use $x, y$ subscripts to denote the $x, y$ coordinates since our experiments are in 2D.
In \eqref{eq:norm_of_scalar} and \eqref{eq:norm_of_vector}, we abuse the notation of $L^2$ norm $\norm[2]{\cdot}$.
We will use the definitions here throughout \Cref{sec:experiments}.

This semi-norm can better measure the distance between the simulated numerical control and the analytical one than the $L^2$ norm. The reason is that a tiny shift of the collision time may introduce a relatively large numerical difference in the controls (and the $L^2$ norm), which nevertheless has negligible effects on the dynamics.
One should take $\omega$ to be larger than the smallest discretization size $\Delta t$. In the experiments, we take $\omega=0.05T$.

In the following experiments, we will always denote the analytical solution for the optimal control by $\bu^{opt}$, the total objective by $J$, and the optimal values of the objective by $J^{opt}$.
We will denote the simulated optimal first collision time by $s$ and its analytical solution by $s^{opt}$. 

In this section, we systematically explore the effectiveness and robustness of our algorithm under various conditions. First, we discretize the problem using $N=480$ and observe that our algorithm reliably converges across a range of scenarios, including those with one or more collisions. Linear convergence is observed in all experiments. To further assess the convergence to analytical optimal solutions, we repeat each experiment with varying discretization sizes for $N=60, 120, \cdots, 1200$. Convergence to analytical solutions is observed in all cases.  Our investigation includes a diverse set of examples: a concentric collision scenario described in \Cref{sub:centric_collision}, a more complex case of a general collision explained in \Cref{sub:general_collision_for_two_balls}, and an instance favoring multiple collisions presented in \Cref{sub:multiple_collisions}. \rev{We also report the performance for a non-convex running cost in \Cref{sub:L1-cost} and the convergence rate with the relaxation parameter $\alpha$ in \Cref{sub:convergence_rate}}. Finally, a comparative analysis with the HMP-MAS method, a direct method, and deep reinforcement learning further underscores the superior performance of our approach, elaborated in \Cref{sub:comparison_to_hmp_mas,sec:comparison_to_direct,sub:comparison_rl}.

\subsection{Concentric collision}%
\label{sub:centric_collision}
We first consider the case when the optimal control is to drive disc 1 to hit disc 2 concentrically. We take the initial state as $\bq_1 = [-2, -2]^T, \bq_2=[-1, -1]^T, \bv_1=\bv_2=[0, 0]^T $ and $\epsilon=0.1, T=1$.
The objective is to push disc 1 to strike disc 2 so that disc 2 will be close to the origin at the terminal time $T=1$.
See Figure \ref{fig:case3_illustration} for an illustration.
\begin{figure}[!htb]
    \centering
    \subfloat[Initial configurations]{
        \label{fig:case3_illustration}
        \includegraphics[width=0.65\linewidth]{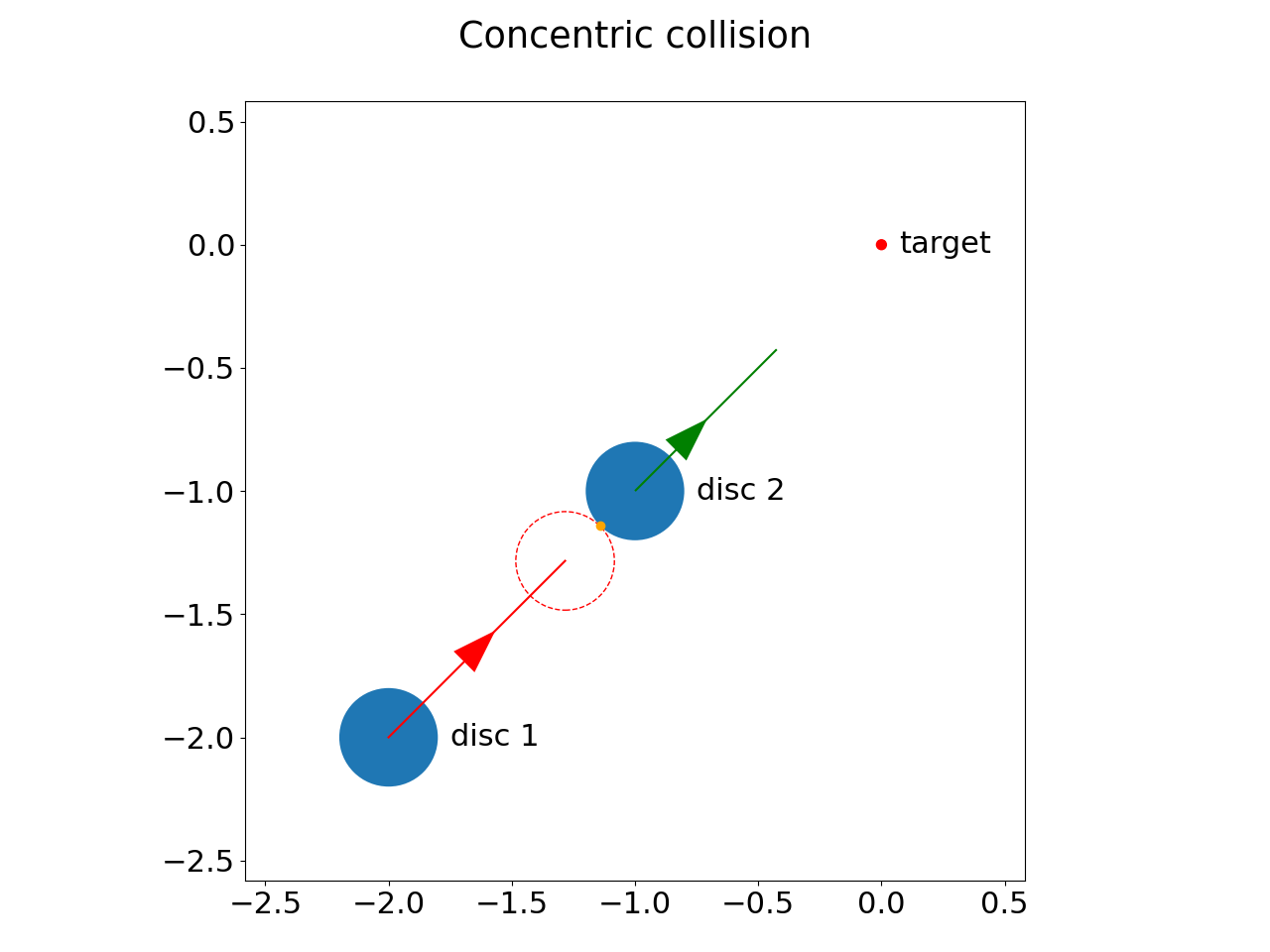}
    }\qquad
    \subfloat[Shape of the optimal control]{
        \label{fig:case3_shape}
        \includegraphics[width=0.95\linewidth]{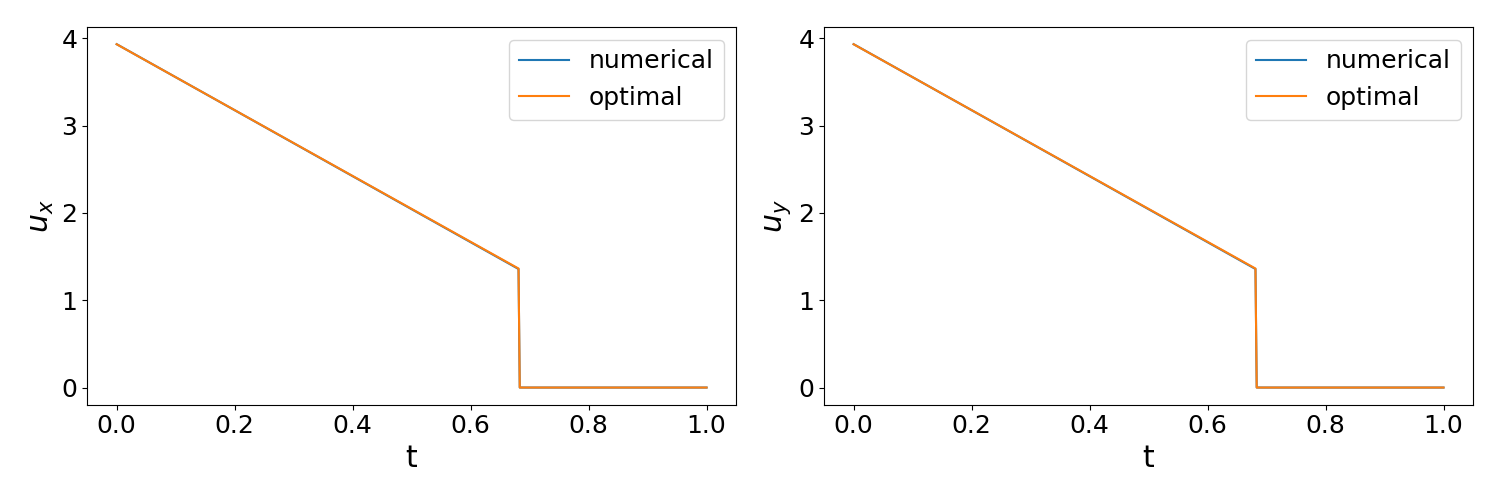}
    }
    \caption{
        Illustration of the concentric collision in \Cref{sub:centric_collision}.
        \protect\subref{fig:case3_illustration}
        Initial state:
        The initial locations of discs are the blue discs.
        The red line and green line are optimal trajectories of disc 1 and disc 2, respectively. 
        The dashed red circle is the before-collision position of disc 1.
        The orange point is the collision point.
        \protect\subref{fig:case3_shape}
        The numerical optimal control and the analytical optimal control, $\bu=[u_x, u_y]$.
    }%
    \label{fig:case3_illustration_shape}
\end{figure}

We discretize the dynamics into 480 steps, \ie $N=480$ and run Algorithm \ref{alg:PMP} with the relaxation parameter $\alpha=0.01$. 
The initial control can be any force fields that make collisions between the two discs happen.
Here, we take a constant force $[3, 3]^T$. 
The optimal analytical control for this example can be solved to machine accuracy; see \Cref{sub:analytical_optimal_controls}.
We demonstrate and compare the simulated numerical control and the analytical optimal control in \Cref{fig:case3_shape}.

In Figure \ref{fig:case3_convergence_versus_iters}, we visualize the solution process by showing how the cost, $\vert\vert H_{u}\vert\vert_2, $ and $\vert\vert\bu-\bu^{opt}\vert\vert_{2}$ converge during the iteration.
We can see that the algorithm converges well in a few hundred iterations. 
Besides, the convergence indicator $\vert\vert H_{u}\vert\vert_2$ implies linear convergence for this example.

\begin{figure}[!htpb]
    \centering
    \includegraphics[width=0.8\linewidth]{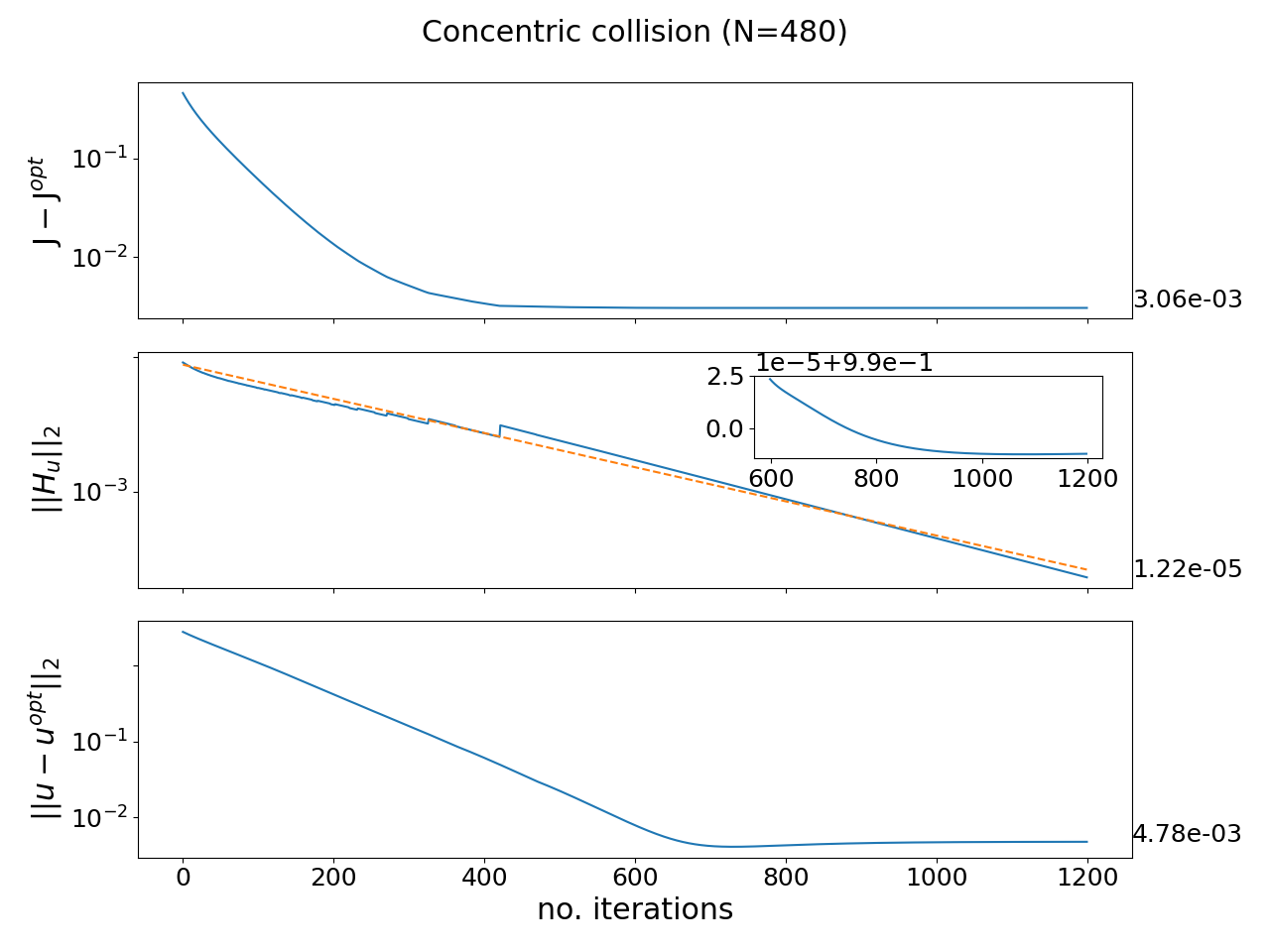}
    \caption{
    Convergence of the iterative algorithm on the concentric collision example in \Cref{sub:centric_collision}.
    The annotations on the right are the final values of the corresponding curves.
    The common $x$ axis denotes the number of iterations spent in the \Cref{alg:PMP}.
    Top: the difference between the analytical optimal objective and the numerical solutions.
    Middle: the $L^2$ semi-norm of the derivative of the Hamiltonian with respect to control $\bu$.
    The dashed line is the result of the regression in the form $\mu\sigma^k$ (this is the linear regression after taking the logarithm.
    $k$ is the number of iterations, $\mu,\sigma$ are positive regression parameters).
    The inset plot is the ratio $\norm[2]{ H_{u}^{k+1}}/\norm[2]{H_{u}^{k}}$ versus the number of iterations. 
    It suggests that the rate of linear convergence is approximately $0.99$.
    Bottom: the difference between the analytical optimal control and the numerical solutions in terms of the $L^2$ semi-norm.}%
    \label{fig:case3_convergence_versus_iters}
\end{figure}

We repeated the experiments for $N=60, 120, \cdots, 1200$, and find that the numerical solution converges to the analytical solution and that the simulated collision times also converge to the analytical solution.
Perfect first-order accuracy is observed; see Figure \ref{fig:case3_convergence_versus_steps}.

\begin{figure}[!htpb]
    \centering
    \includegraphics[width=1.0\linewidth]{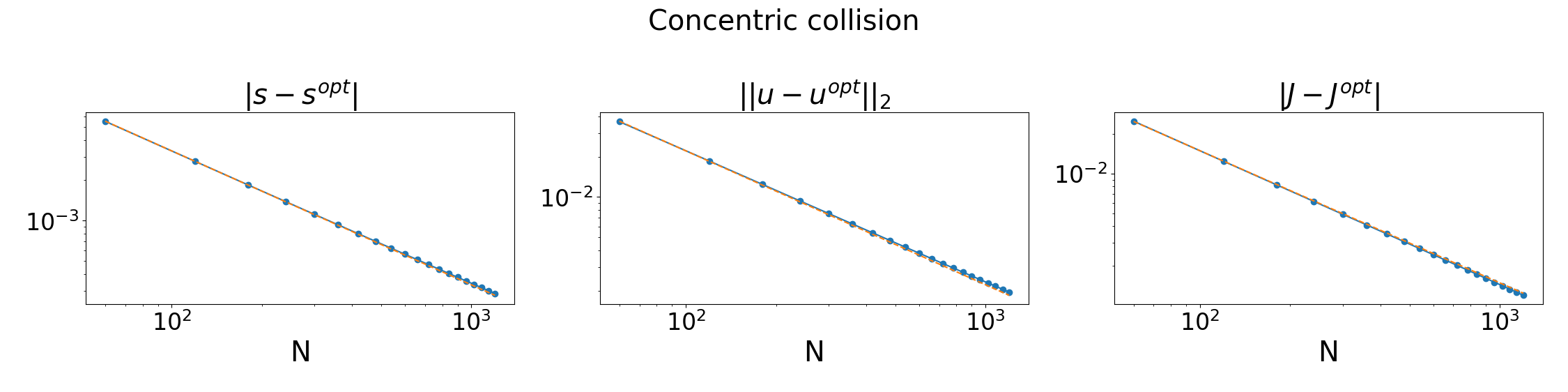}
    \caption{
    Convergence to the analytical solution when the number of time intervals $N$ tends to infinity on the concentric collision example in \Cref{sub:centric_collision}.
    The dashed orange lines are straight lines obtained from the least square regression on the data with model $\sigma/N$ and parameter $\sigma$.
    Left: Difference between the simulated collision times and the analytical optimal collision time.
    Middle: Difference between the numerical and analytical solutions for the optimal controls in terms of the $L^2$ semi-norm.
    Right: Difference between the simulated total costs and the analytical solution.
    }%
    \label{fig:case3_convergence_versus_steps}
\end{figure}

\subsection{General situation for two discs}%
\label{sub:general_collision_for_two_balls}
This experiment shows that our algorithm performs well in more general situations when relative velocity forms a non-zero angle with the axis of collision.
We take the initial state to be $\bq_1=[-1, -2]^T, \bq_2=[-1,-1]^T, \bv_1=\bv_2=[0, 0]^T $ and $\epsilon=0.01, T=1$. See Figure \ref{fig:case3_oblique_illustration} for an illustration of the setup. 

\begin{figure}[!htpb]
    \centering
    \subfloat[Initial configurations]{
        \label{fig:case3_oblique_illustration}
        \includegraphics[width=0.65\linewidth]{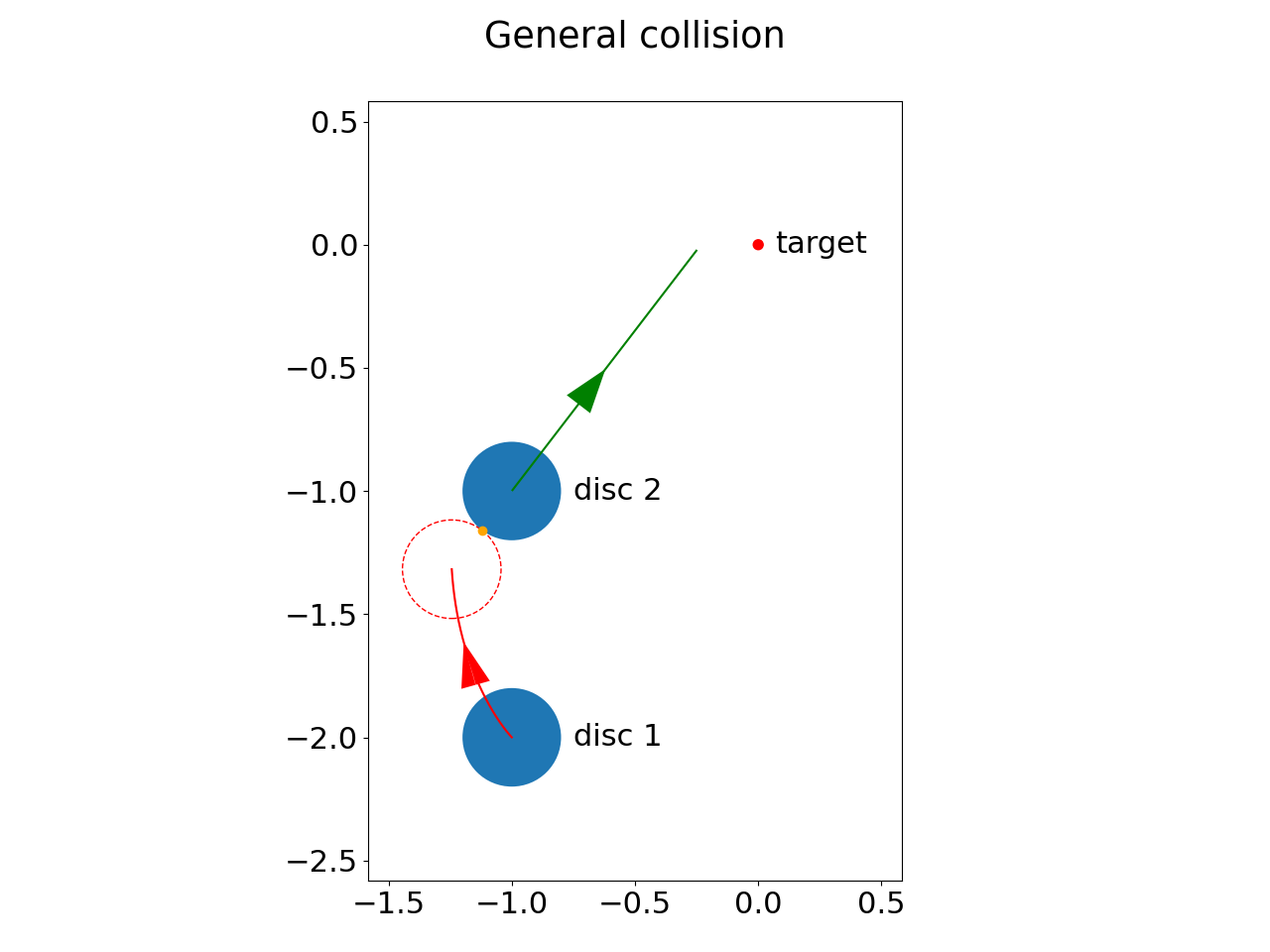}
    }\qquad
    \subfloat[Shape of the optimal control]{
        \label{fig:case3_oblique_shape}
        \includegraphics[width=0.95\linewidth]{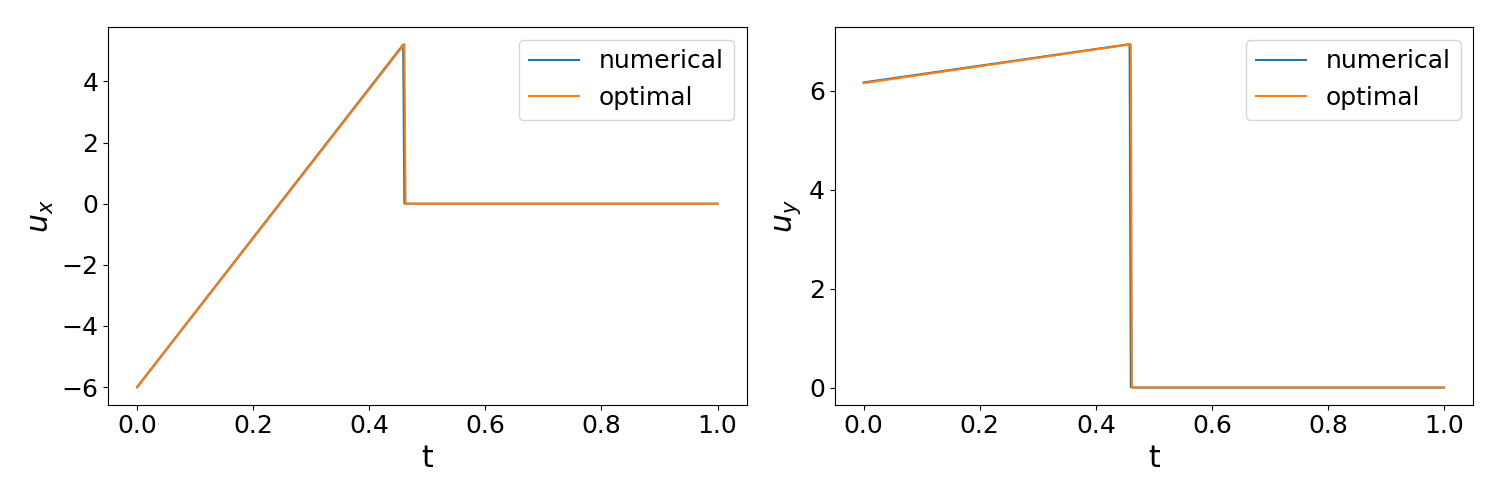}
    }
    \caption{
        Illustration of the general collision in \Cref{sub:general_collision_for_two_balls}.
        The meaning of each plot is similar to that of \Cref{fig:case3_illustration_shape}.
    }%
\end{figure}

We discretize the dynamics into 480 steps, \ie $N=480$ and run Algorithm \ref{alg:PMP} with the relaxation parameter $\alpha=0.01$.
The initial control is a constant field $[0, 3]^T$ under which disc 1 collides with disc 2.
The analytical solution can also be found for this problem,
see Appendix \ref{sub:analytical_optimal_controls} for the derivation and Figure \ref{fig:case3_oblique_illustration} for the optimal trajectories.
The numerical result is shown and compared to the analytical one in Figure \ref{fig:case3_oblique_shape}.

In Figure \ref{fig:case3_oblique_convergence_versus_iters}, we show the change of cost, $\vert\vert H_{u}\vert\vert_2$ and $\vert\vert\bu-\bu^{opt}\vert\vert_{2}$.
We see that a few hundred iterations are sufficient for convergence.
We notice that linear convergence is also observed in terms of the convergence indicator $\vert\vert H_{u}\vert\vert_2$.
\begin{figure}[!htpb]
    \centering
    \includegraphics[width=0.8\linewidth]{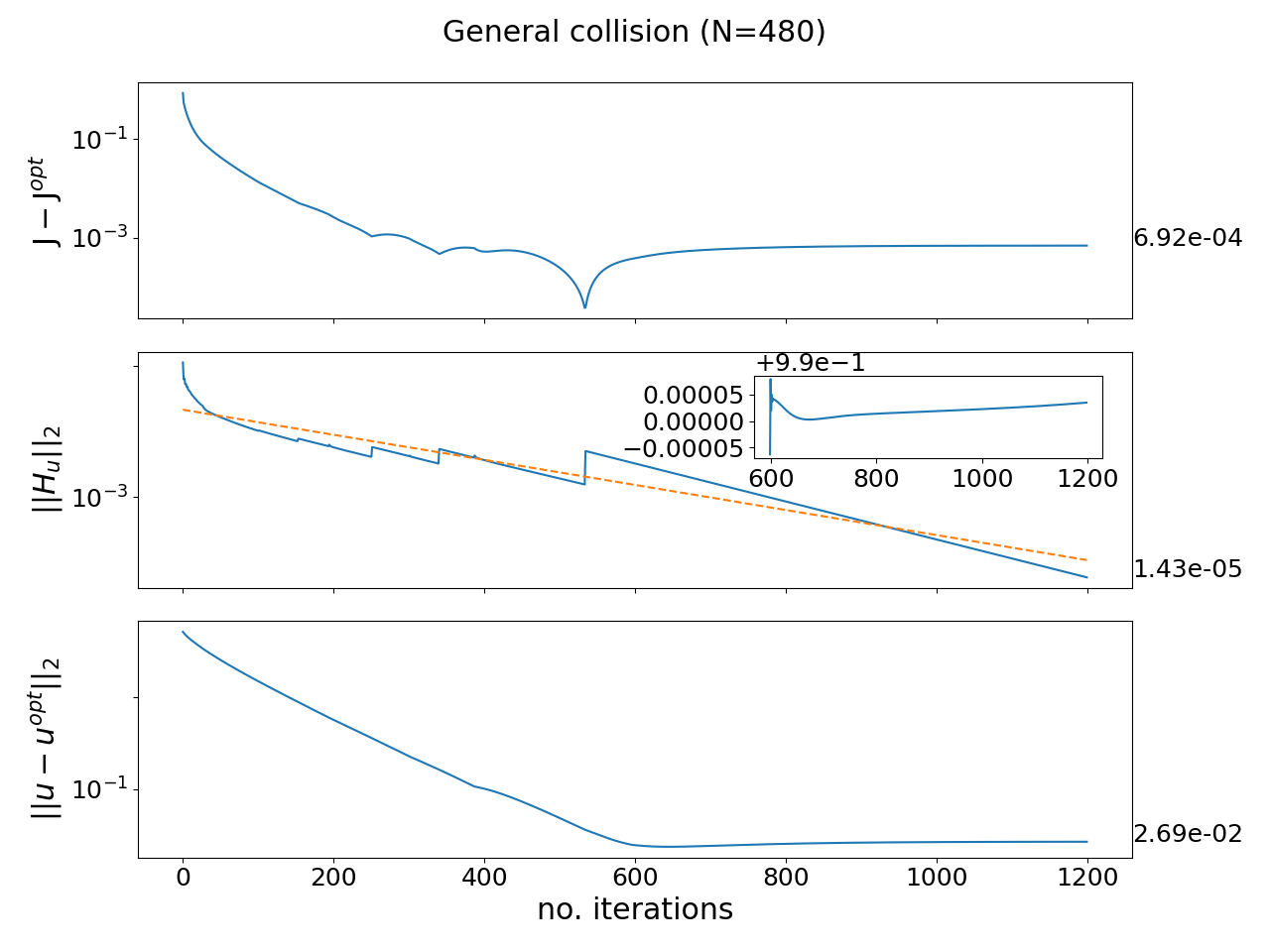}
    \caption{
    Convergence of the iterative algorithm on the general collision example in \Cref{sub:general_collision_for_two_balls}.
    The meaning of each plot is similar to that of \Cref{fig:case3_convergence_versus_iters}.
    }%
\label{fig:case3_oblique_convergence_versus_iters}
\end{figure}

We also demonstrate the convergence to the analytical solution as $N$ increases.
Asymptotic first-order accuracy is observed for this case;
see Figure \ref{fig:case3_oblique_convergence_versus_steps}.

\begin{figure}[!htpb]
    \centering
    \includegraphics[width=1.0\linewidth]{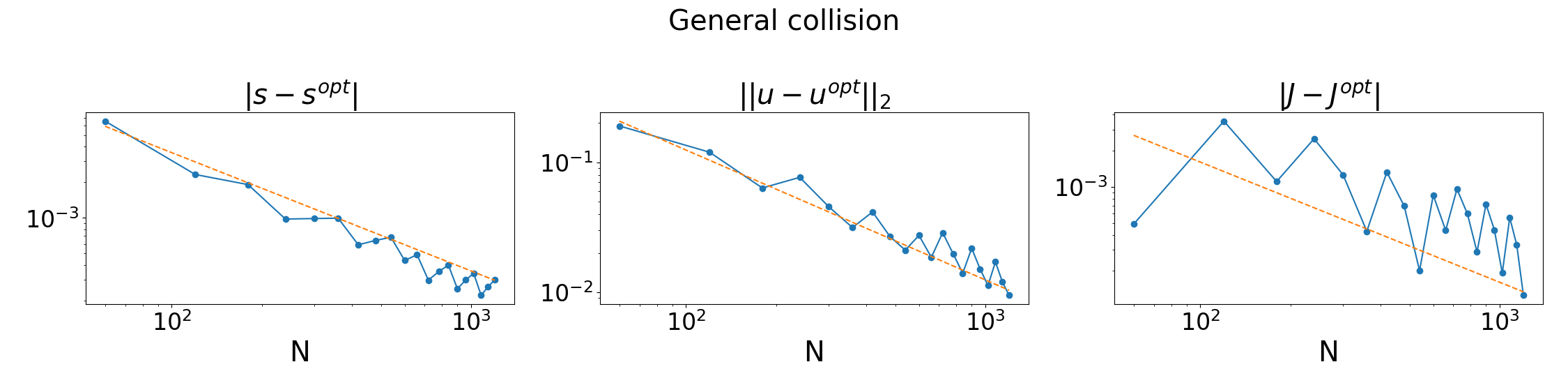}
    \caption{
    Convergence to the analytical solution when the number of time intervals $N$ tends to infinity on the general collision example in \Cref{sub:general_collision_for_two_balls}.
    The meaning of each plot is similar to that of \Cref{fig:case3_convergence_versus_steps}.
    }%
\label{fig:case3_oblique_convergence_versus_steps}
\end{figure}

\subsection{Multiple collisions}%
\label{sub:multiple_collisions}
In this example, we test our algorithm in the case where multiple collisions are needed to achieve optimality.
We add a wall, treated as a rigid body with an infinitely large mass.
There are three possible collisions in this system; inter-disc, the wall with disc 1 and the wall with disc 2.
A natural way to model this system is to independently model the collision, \ie the detection and jump functions, of each possible colliding pair of rigid bodies.
The functions of the wall-disc collision are detailed in \Cref{app:wall_disc_colllision}. Subsequently, we employ the method outlined in \Cref{app:merge_multiple_conllision} to formulate the problem as that in \Cref{sec:formulation_and_assumptions}.

We take the initial state as $\bq_1=[-\frac{3}{2}-\frac{\sqrt{2}}{5}, \frac{1}{10}-\frac{\sqrt{2}}{5}]^T, \bq_2=[-1, \frac{3}{5}]^T,\bv_1=\bv_2=[0, 0]^T$ and $\epsilon=0.01, T=1$.
The wall, whose normal vector is $[0, 1]^T$, is placed at a distance of $1$ from the origin.
See Figure \ref{fig:case4_illustration} for an illustration.
In this setup, we control disc 1 to collide with disc 2 in the way that disc 2 will approach the target with the help of wall collision.
The analytical solution for the optimal control can also be obtained, see Section \ref{sub:analytical_optimal_controls}.

\begin{figure}[!htpb]
    \centering
    \subfloat[Initial configurations]{
        \label{fig:case4_illustration}
        \includegraphics[width=0.6\linewidth]{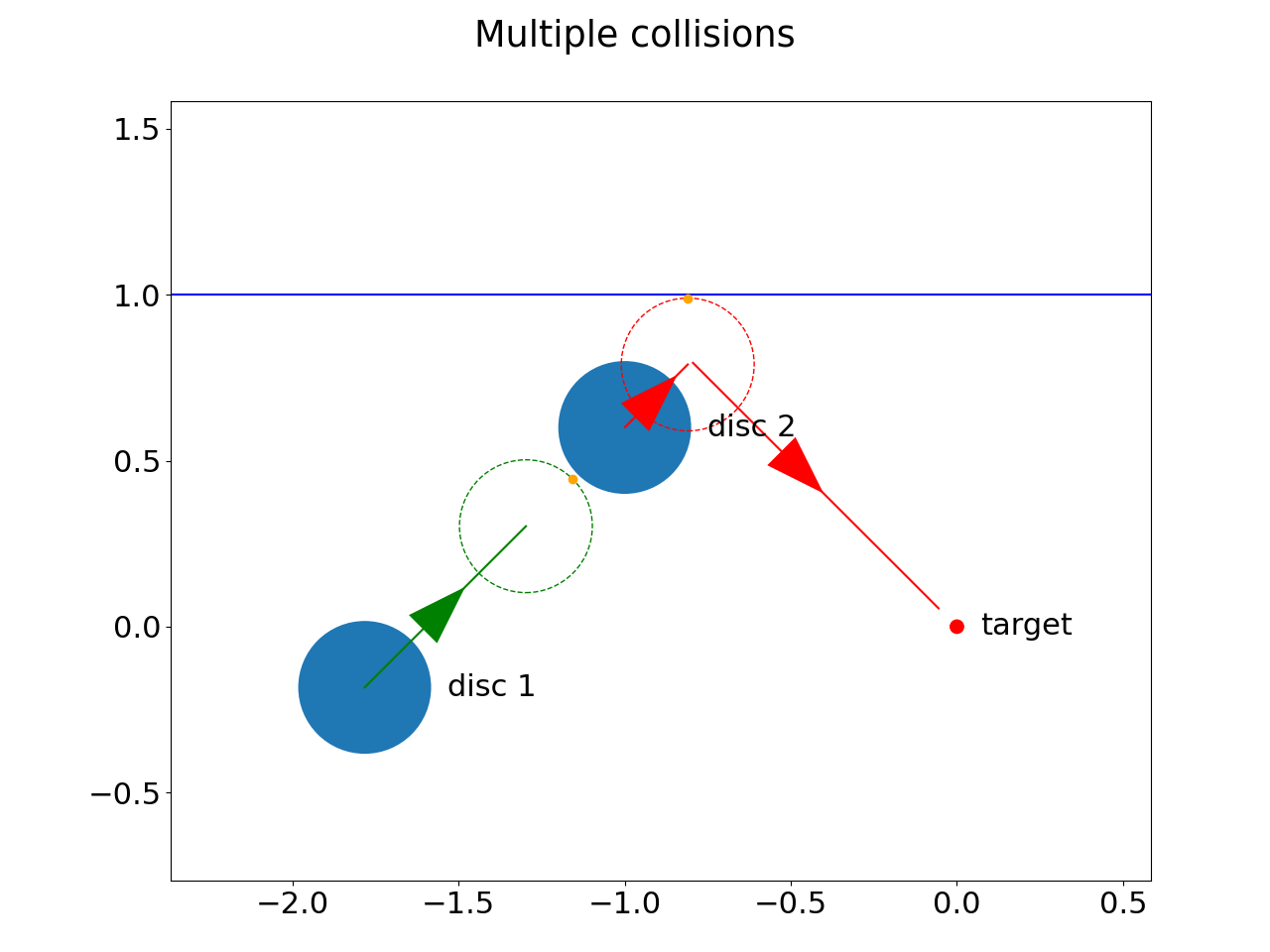}
    }\qquad
    \subfloat[Shape of the optimal control]{
        \label{fig:case4_shape}
        \includegraphics[width=0.95\linewidth]{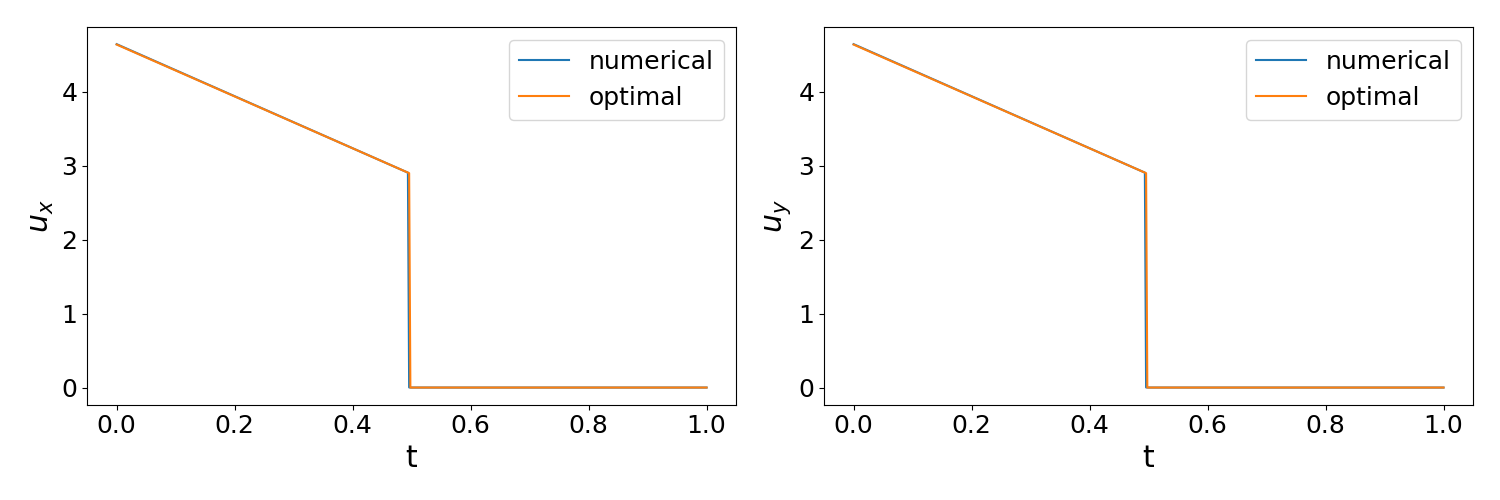}
    }
    \caption{
        Illustration of the multiple collisions in \Cref{sub:multiple_collisions}.
        The meaning of each plot is similar to that of \Cref{fig:case3_illustration_shape}.
        The blue line represents the wall.
        The dashed red circle is the position of disc 2 when disc 2 collides with the wall.
        The orange points are the collision points.
    }%
\end{figure}

We discretize the dynamics into 480 steps, \ie $N=480$ and run the Algorithm \ref{alg:PMP} with the relaxation parameter $\alpha=0.01$.
The initial control is the constant control pointing disc 2 from disc 1 with magnitude $3\sqrt{2}$, \ie $[3, 3]^T$.
The numerical result is shown and compared to the analytical one in Figure \ref{fig:case4_shape}.
Again, linear convergence is observed, and a few hundred iterations are enough for the algorithm to converge, see Figure \ref{fig:case4_convergence_versus_iters}.

\begin{figure}[htpb]
    \centering
    \includegraphics[width=0.8\linewidth]{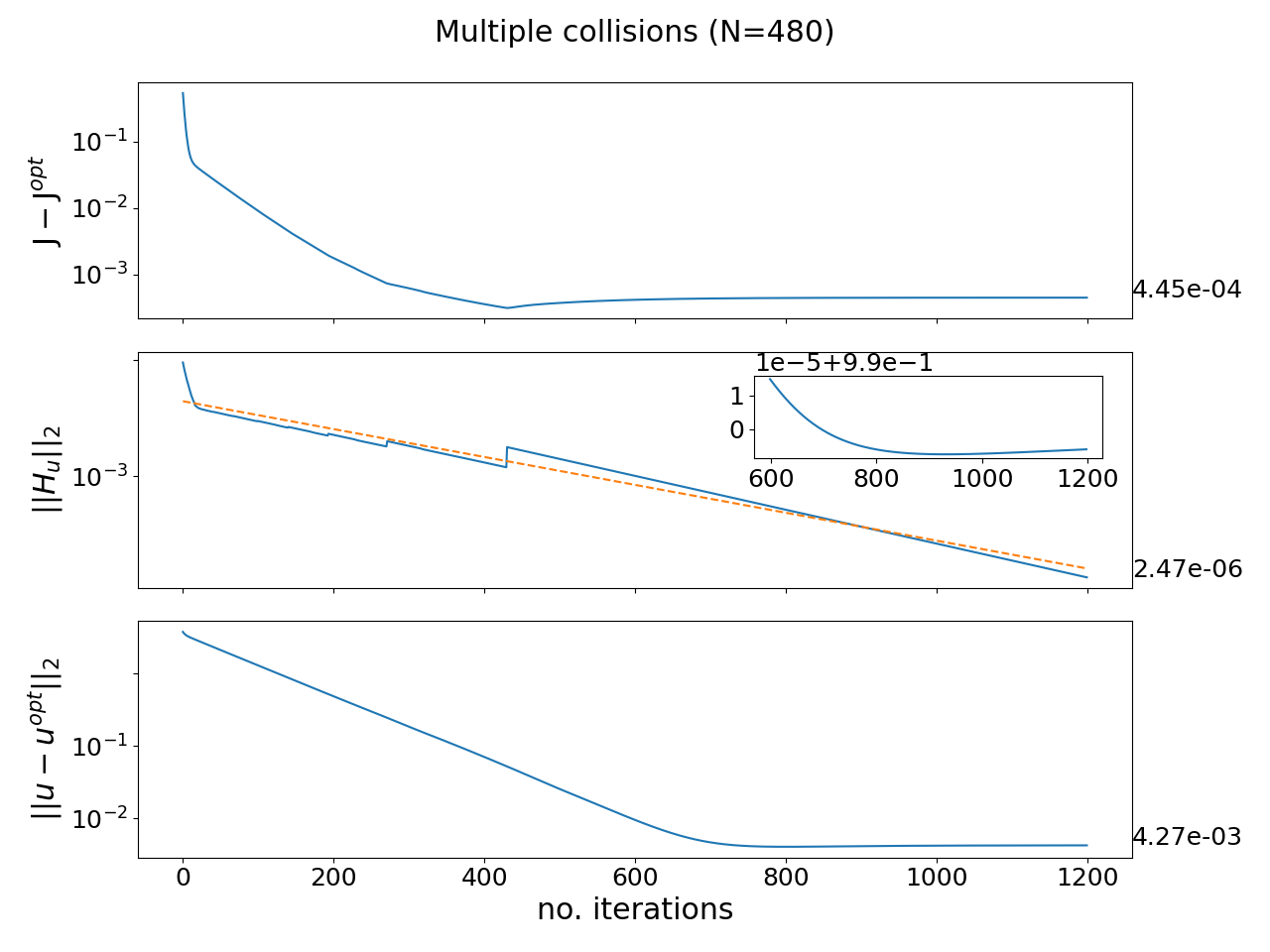}
    \caption{
    Convergence of the iterative algorithm on the multiple collisions example in \Cref{sub:multiple_collisions}.
    The meaning of each plot is similar to that of \Cref{fig:case3_convergence_versus_iters}.
    }%
    \label{fig:case4_convergence_versus_iters}
\end{figure}

As depicted in Figure \ref{fig:case4_convergence_versus_steps}, our optimal numerical control is approaching the analytical solution as $\Delta t$ approaches zero and
the collision times also converge to the analytical ones.
We also note that the order of accuracy is still first order. It does not degrade with more collisions.

\begin{figure}[htpb]
    \centering
    \includegraphics[width=1.0\linewidth]{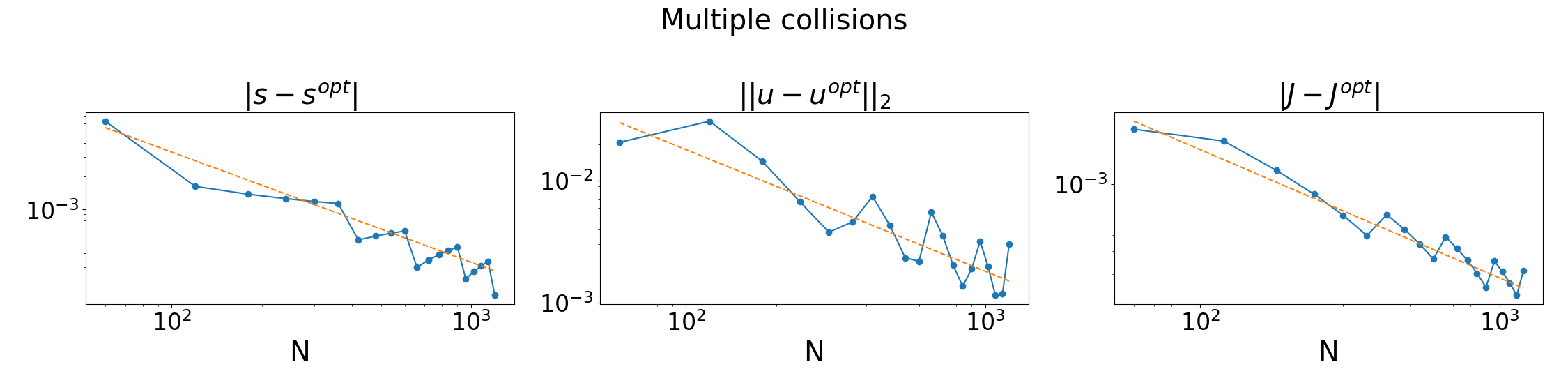}
    \caption{
    Convergence to the analytical solution when the number of time intervals $N$ tends to infinity on the multiple collisions example in \Cref{sub:multiple_collisions}.
    The meaning of each plot is similar to that of \Cref{fig:case3_convergence_versus_steps}.
    }%
    \label{fig:case4_convergence_versus_steps}
\end{figure}

\subsection{\texorpdfstring{$L^1$}{L1} running cost}
\label{sub:L1-cost}
To further demonstrate the performance of the proposed algorithm, we reconsider the example in Section~\ref{sub:general_collision_for_two_balls}, a general collision of two discs, but with a different (non-quadratic) running cost,
\[
L(\bx, \bu) = \epsilon \vert\bu\vert,
\]
under the constraint $\vert \bu \vert \le u_{M}$.
Namely, we have
\[
U = \{(\bu_x, \bu_y)\in\bR^2: \bu_x^2 + \bu_y^2 \le u_M^2 \}.
\]
The associated Hamiltonian is
\begin{equation}
    \label{eq:eg_hamiltonian_l1}
    H(\bx,\blambda, \bu) =  \blambda_{q_1}\cdot \bx_{v_1}+\blambda_{q_2}\cdot \bx_{v_2} + \frac{1}{m_1} \blambda_{v_1}\cdot \bu + \epsilon\vert\bu\vert.
\end{equation}
Its minimizer with respect to $\bu$ is
\begin{equation}
    \label{eq:eg_hamiltonian_l1_argmin}
    \bu_{*} = \arg\min_{\bu \in U} H(\bx,\blambda, \bu) = -u_M \frac{\blambda_{v_1}}{\vert\blambda_{v_1}\vert}
\end{equation}
when $\vert\blambda_{v_1}\vert \ge m_1\epsilon$ and $0$ otherwise. We take $u_M=5\sqrt{2}$ and $\epsilon=0.01$ in the experiment.
The optimal control, numerically solved, is visualized in Figure \ref{fig:gc_l1_norm_shape}.
We also plot the direction of $\blambda_{v_1}$ and $\bu\cdot\blambda_{v_1}/\vert\blambda_{v_1}\vert$.
At the final iteration, $\vert\blambda_{v_1}\vert>m_1\epsilon$ before the collision and $\vert\blambda_{v_1}\vert=0$ after the collision.
The projection $\bu\cdot\blambda_{v_1}/\vert\blambda_{v_1}\vert$ achieves its minimum value $-u_M$ before the collision, which shows that the obtained control minimizes the Hamiltonian \eqref{eq:eg_hamiltonian_l1} according to \cref{eq:eg_hamiltonian_l1_argmin}.
Finally, as depicted in Figure \ref{fig:gc_l1_norm_cost}, the total objective decreases with iterations and converges. We have also tested a different constraint set $U$ that imposes constraints on control componentwisely in this example and the example of concentric collision in Section~\ref{sub:centric_collision}. In all these examples with $L^1$ running cost, the proposed algorithm converges without any further tuning.
\begin{figure}[!htpb]
    \centering
    \subfloat[The optimal control]{
        \label{fig:gc_l1_norm_shape}
        \includegraphics[width=0.95\linewidth]{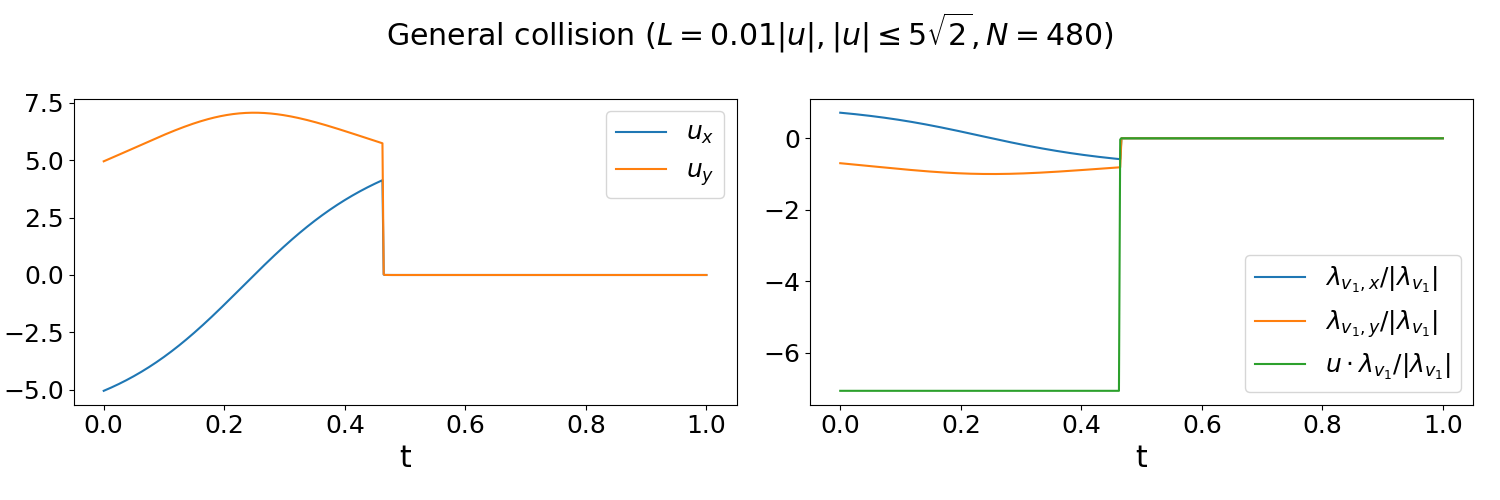}
    }\qquad
    \subfloat[Total costs]{
        \label{fig:gc_l1_norm_cost}
        \includegraphics[width=0.8\linewidth]{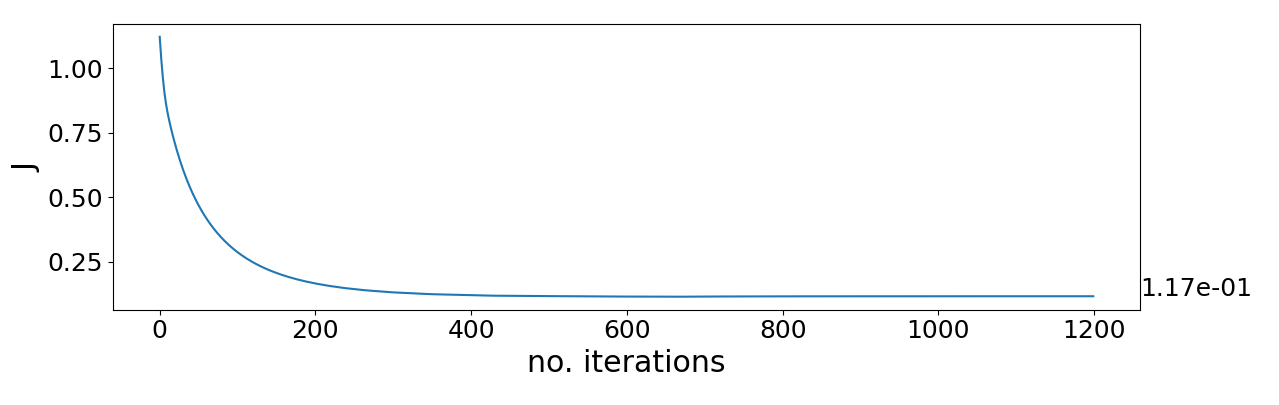}
    }
    \caption{
        Illustration of the general collision with $L^1$ running cost in \Cref{sub:L1-cost}.
        \protect\subref{fig:gc_l1_norm_shape}
        Left: The numerical optimal control $\bu=[\bu_x, \bu_y]$.
        Right: The direction of $\blambda_{v_1}=[\blambda_{v_1, x}, \blambda_{v_1,y}]$ and the projection of $\bu$ to it. 
        \protect\subref{fig:gc_l1_norm_cost}
        The objective versus iterations.
        }
\end{figure}

\subsection{\rev{Convergence rate with $\alpha$}}
\label{sub:convergence_rate}
\rev{
In this subsection, we numerically investigate the convergence rate with respect to the relaxation parameter $\alpha$ in \cref{eq:u_relax_update}, using the general collision example discussed in \Cref{sub:general_collision_for_two_balls}. Specifically, we depicts the convergence of $\bu$ toward the optimal control $\bu^{\text{opt}}$ by tracking the $L^2$ distance $||\bu - \bu^{\text{opt}}||_2$ for three parameter values: $\alpha = 0.01, 0.005$, and $0.001$, clearly demonstrating a linear convergence trend.
To quantify the rate of linear convergence 
    \[
        \lim_{k\rightarrow\infty} \frac{||\bu^{k+1} - \bu^{\text{opt}}||_2}{||\bu^{k} - \bu^{\text{opt}}||_2},
    \]
we fit $||\bu^{k} - \bu^{\text{opt}}||_2$ to the form $\mu \sigma^k$, where $\mu$ and $\sigma$ are fitting parameters. This is equivalent to performing a linear regression on $\log ||\bu^{k} - \bu^{\text{opt}}||_2$. Here, $\sigma$ represents the rate of linear convergence, characterizing how quickly the algorithm converges with respect to the iteration count $k$. For $\alpha = 0.01, 0.005$, and $0.001$, the respective regressed values of $\sigma$ are $0.9911, 0.9956$, and $0.9989$, indicating that $\sigma \approx 1 - \alpha$.
Moreover, we note that for larger values of $\alpha$, such as $0.015$, the algorithm converges to $\bu=0$ without collisions since the large update steps suppress collisions during the iterations. This observation underscores the necessity of incorporating the relaxation parameter $\alpha$ in the standard MSA algorithm to ensure proper convergence.}

\begin{figure}[!htpb]
\centering\includegraphics[width=0.6\linewidth]{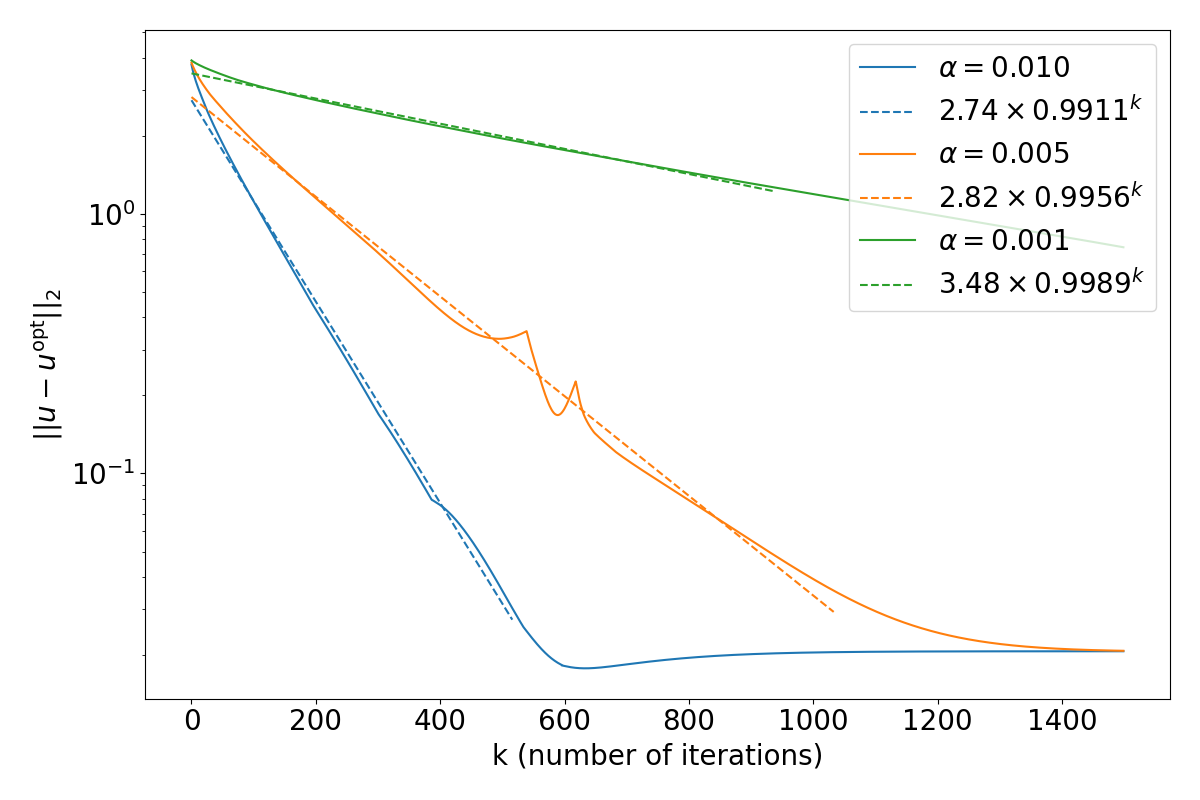}
\rev{
        \caption{Convergence of the iterative algorithm on the general collision example in \Cref{sub:general_collision_for_two_balls} under varying relaxation parameters $\alpha$.
        The dashed straight lines represent the corresponding regression results in the form $\mu\sigma^k$ (linear regression on $\log ||\bu^{k} - \bu^{\text{opt}}||_2$). Here, $k$ denotes the number of iterations, while $\mu$ and $\sigma$ are positive regression parameters.
        The values of $\sigma$ suggest the corresponding rate of linear convergence, approximately satisfying $\sigma \approx 1 - \alpha$.
        }}
        \label{fig:general_collision_different_alpha}
    \end{figure}

\section{Comparisons to other methods}
\label{sec:comparison}

\subsection{Comparison to HMP-MAS}%
\label{sub:comparison_to_hmp_mas}

In \cite{autoSplit}, given the number of switching times, the author proposed to solve the optimal control in multiple autonomous switchings system based on the hybrid maximum principle (HMP-MAS) by alternating between
(1) solving for optimal controls in the sub-domains divided by guessed switching time and switching states;
(2) computing gradients and updating the guesses by gradient descent.
The method has been extended to autonomous hybrid dynamical systems on manifolds \citep{autoSplitManifold, autoSplitManifold2}.
Recently, it has been extended to the autonomous hybrid dynamical systems with both state jumps and switches in dynamics in \cite{hmp_mas_extend2}.
We adapt this method to the dynamical systems discussed in this paper.

To be specific, we assume that there will be one collision for the optimal control.
Starting from a reasonable guess that the collision happens at $t=s$ and at state $\bx_s^-$, we solve two standard optimal control problems:
find the minimal cost control that shoots $(s, \bx_s^-)$ from $(0, \bx_0)$  and 
find the optimal control that minimizing the running cost and terminal cost starting from $\bx_s^+$ at  $t=s$.
Then, one can compute the gradient of the total cost with respect to $s$ and  $\bx_s^-$ and then update them by gradient descent.
The constraint $\phi(\bx_s^-) = 0 $ is enforced as a penalty added to the total costs.

The overall performance of this algorithm is comparable to our algorithm, exhibiting a similar convergence curve and error.
However, one needs to prescribe the correct number of collisions beforehand for this method.
We run the multiple collisions experiment (\cref{sub:multiple_collisions}) starting from an initial trajectory with three collisions (see Figure \ref{fig:case4_multiple_illustration}).
Our algorithm converges to the optimal control with two collisions without any modifications. In contrast, HMP-MAS cannot converge to the optimal solution starting from such an initialization. 
\begin{figure}[htb]
    \centering
    \includegraphics[width=0.65\linewidth]{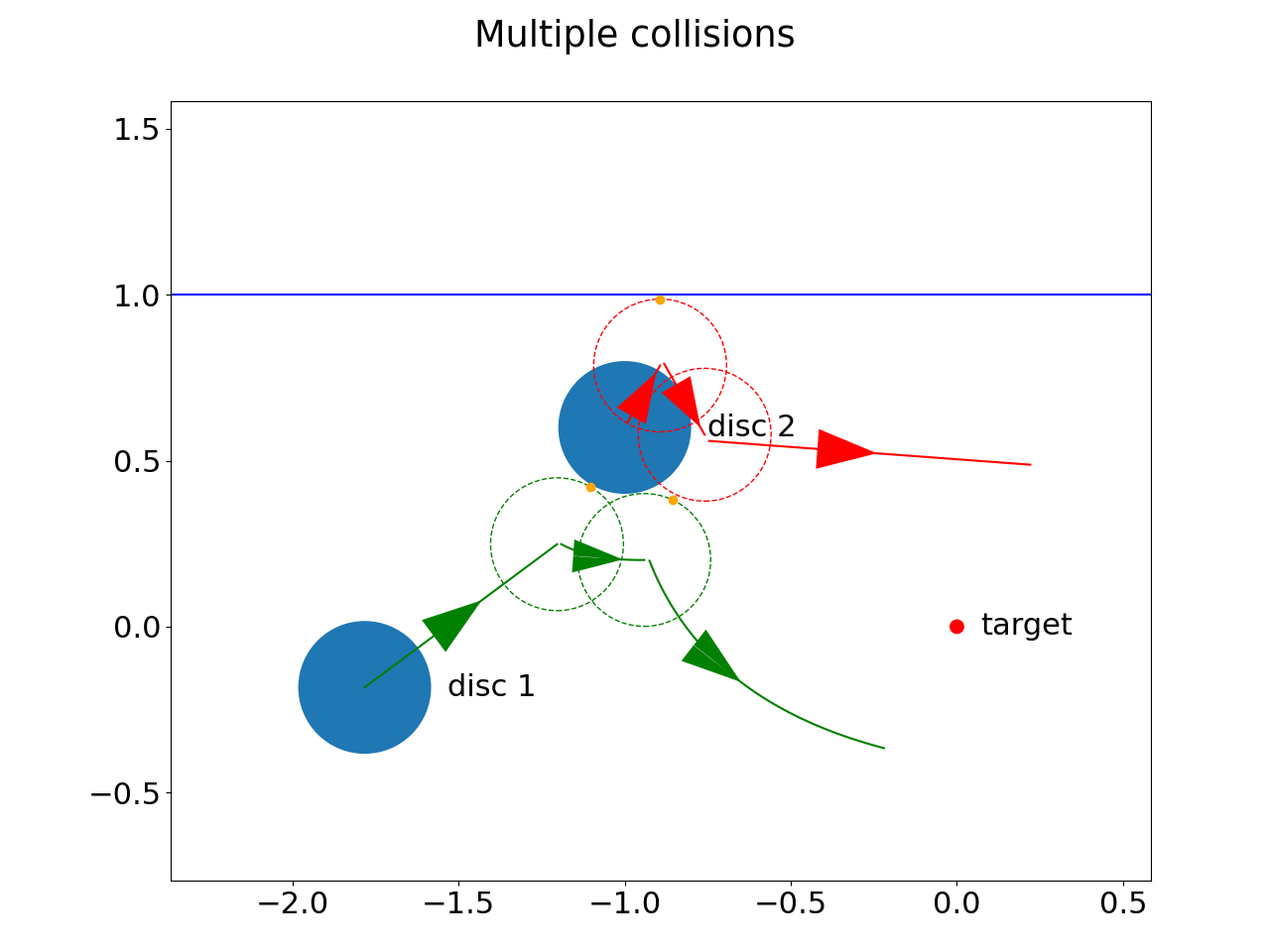}
    \caption{
        Illustration of the trajectories under a control that results in three collisions.
        The green and red lines with arrows are trajectories of disc 1 and disc 2, respectively.
        Dashed green circles and red circles are snapshots of disc 1 and disc 2 when collisions occur, respectively.
        The orange points are collision points.
        See \Cref{sub:comparison_to_hmp_mas}.
    }%
    \label{fig:case4_multiple_illustration}
\end{figure}

\subsection{Comparison to a direct method}
\label{sec:comparison_to_direct}
In this subsection, we compare Algorithm~\ref{alg:PMP} with a gradient descent algorithm, which is applied to the following discretized problem
\begin{mini!}|l| {u_0, \cdots, u_{N-1}}{\phi(\bx_N) + \sum_{i=0}^{N-1}L(\bx_i, \bu_i) \Delta t,} 
    {\label{eq:opt_problem_dis_col}}{}
    \addConstraint{\cref{eq:fd_case1,eq:fd_case2}.}
\end{mini!}
The discrete dynamics is implemented in PyTorch \citep{pytorch} with gradients with respect to the controls $\bu_0, \cdots, \bu_{N-1}$ been calculated by auto-differentiation.

The learning rate is chosen to be compatible with Algorithm \ref{alg:PMP}.
For the examples considered in this paper, the continuous version of the control update step in Algorithm \ref{alg:PMP}, \ie \cref{eq:u_relax_update}, can be rewritten as
\[
\bu^{k+1} = (1-\alpha)\bu^k + \alpha \bu_*  = \bu^k - \frac{\alpha}{2\epsilon}H_{u}(\bx^k, \blambda^k, \bu^k),
\]
recalling the minimizer $\bu_*$ in \cref{eq:eg_u_minimizer} and $H_{u}$ in \cref{eq:eg_h_u}. 
Therefore, we set the learning rate to $\frac{\alpha N}{2\epsilon}$, where the factor $N$ is multiplied to compensate the integration step size $\Delta t$ 
since the discrete gradient has an additional time step factor $\Delta t$ compared to its continuous counterpart. \footnote{
We take the functional $\cS(v) = \int_{0}^{T} v(t)^2\,dt$ as an example to explain why discretization introduces the factor $\Delta t$ in the gradient. Here $v$ is a continuous square-integrable function.
The Fr\'echet functional derivative of $\cS(v)$ in $L^2$ is $S'(v)=2v$.
If we take the discrete approximation of the integral as $\tilde{S}(v_0, \cdots, v_{N-1})=\sum_{i=0}^{N-1}v_i^2\Delta t$, where $v_i$ are values at $t_i$,
the function $\tilde{S}: \bR^{N}\rightarrow\bR$ has gradient $\nabla \tilde{S}(v_0,\cdots,v_{N-1}) = 2\Delta t(v_0,\cdots,v_{N-1})$.}
We note that selecting a consistent learning rate is only for the purpose of comparison.
As can be seen in Figure \ref{fig:direct_performance}, the learning curves of these two methods overlap at the beginning of the iteration.
Further changes to the learning rate and more training iterations will not lead to a reduction in the error of the solution for the direct method.

We have compared the performance of Algorithm \ref{alg:PMP} and the direct method at $N=480$ on all three examples considered previously, and the comparison is shown in Figure~ \ref{fig:direct_performance}.
We find that our HMP-based algorithm always performs much better than the direct method;
the numerical solution of the optimal control and the corresponding total cost of the Algorithm \ref{alg:PMP} are much closer to their analytical ones.

\begin{figure}
    \centering
    \includegraphics[width=1.0\linewidth]{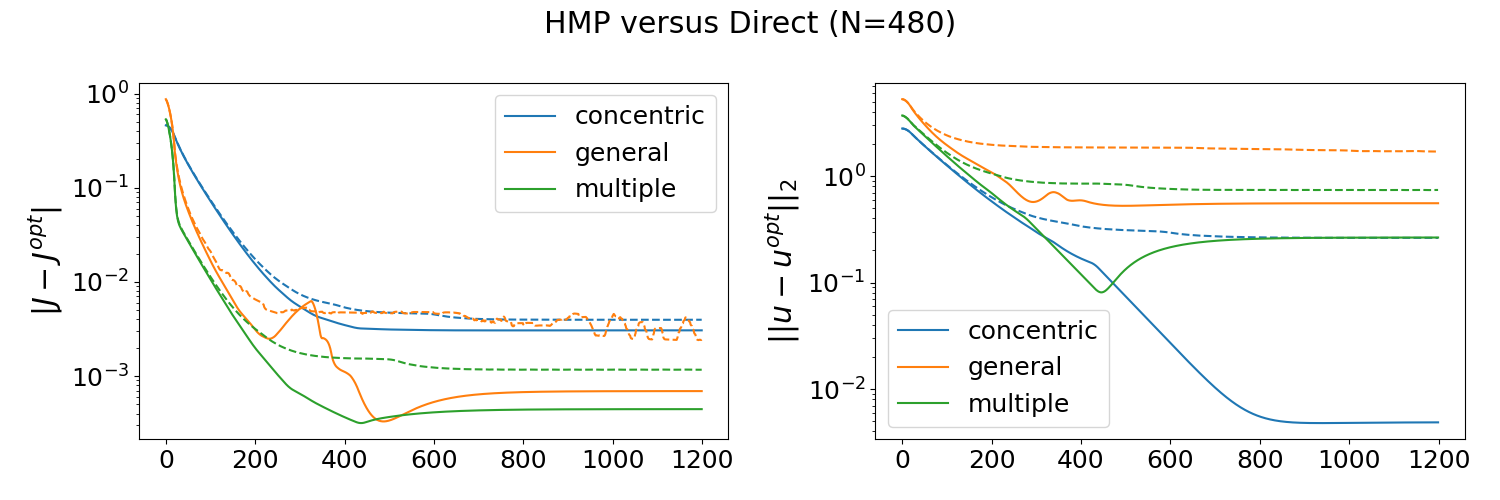}
    \caption{
    The comparison between HMP (Algorithm \ref{alg:PMP}) and the direct method over all three experiments; the concentric collision (\cref{sub:centric_collision}), the general situation (\cref{sub:general_collision_for_two_balls}) and the multiple collisions (\cref{sub:multiple_collisions}).
    Solid line are data from the Algorithm \ref{alg:PMP}.
    Dashed lines are data from the direct method; 
    Different colors correspond to different experiments, see legends in the figure.
    Left: Difference between the simulated total costs and the optimal analytical total costs. Right: Difference between the numerical controls and analytical optimal controls in terms of $L^2$ semi-norm.
    See \Cref{sec:comparison_to_direct}.}
    \label{fig:direct_performance}
\end{figure}

The convergence behavior as $N\rightarrow\infty$ is also compared, see Figures in \ref{fig:direct_compare}.
As seen from the figure, the direct method also produces reasonable solutions in terms of the total costs but does not converge well;
the errors to the optimal analytical controls measured in $L^2$ semi-norm show no signs of convergence as $N$ increases. 
Similar behavior has been observed in the collision times.
The direct method produces comparable results regarding the total costs in the concentric collision example and multiple collisions example but still fails to converge in the general situation example.

\begin{figure}[htpb]
    \centering
    \includegraphics[width=1.0\linewidth]{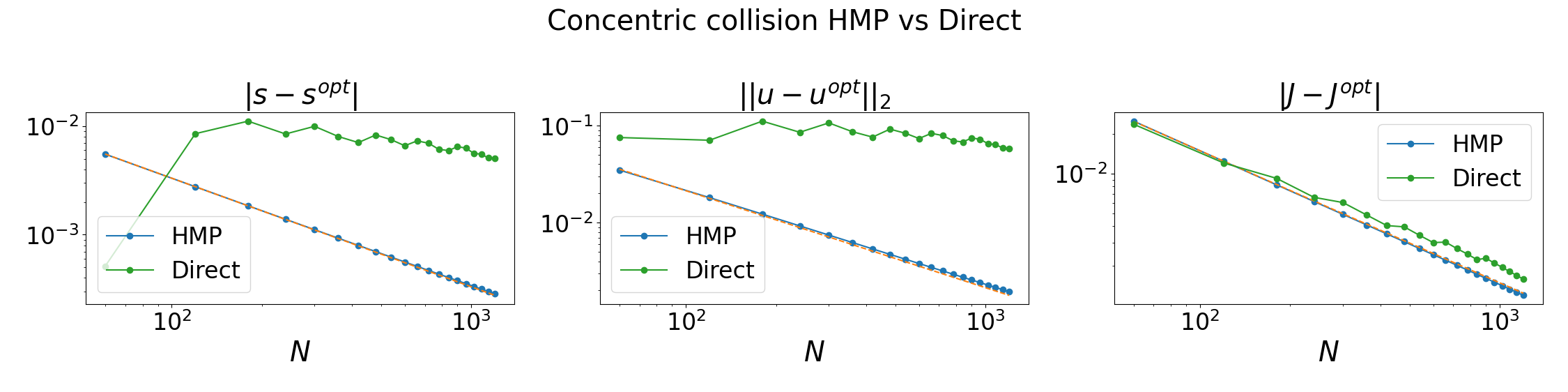}
    \includegraphics[width=1.0\linewidth]{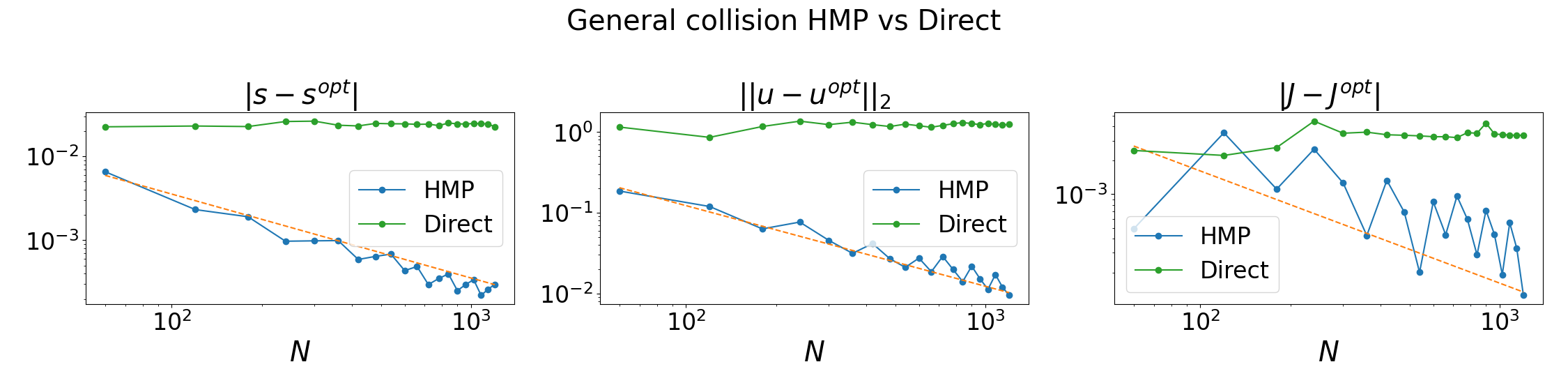}
    \includegraphics[width=1.0\linewidth]{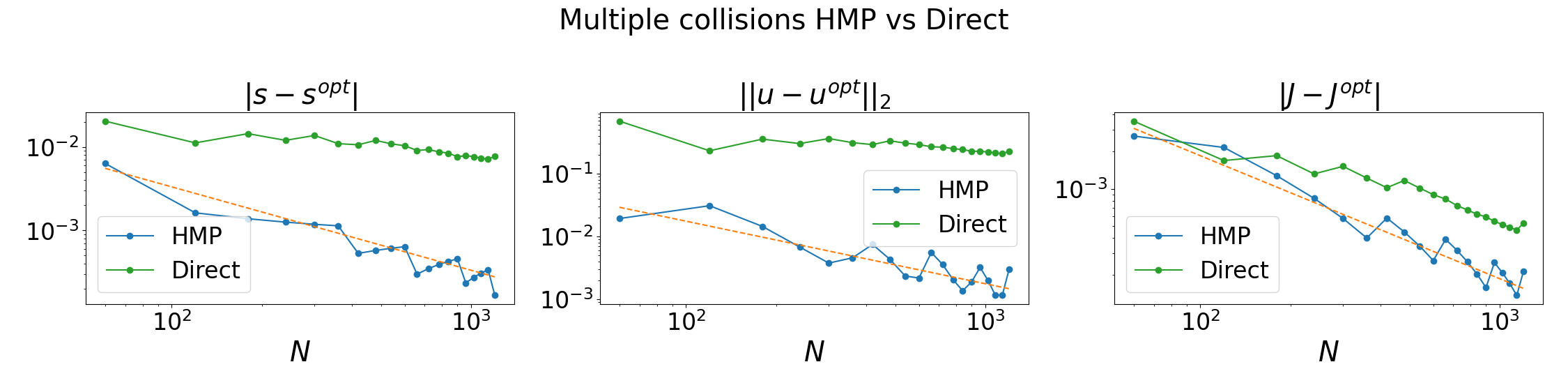}
    \caption{
    Convergence to the analytical solution as $N$ increases.
    The dashed orange lines are straight lines obtained by the least square regression on the results of Algorithm \ref{alg:PMP} with model $\sigma/N$ and parameter $\sigma$.
    Top: The concentric collision example (\cref{sub:centric_collision}).
    Middle: The general situation example (\cref{sub:general_collision_for_two_balls}).
    Bottom: The multiple collisions example (\cref{sub:multiple_collisions}).
    Left: Difference between the simulated collision times and the analytical optimal collision times. 
    Middle: Difference between the numerical controls and the analytical optimal controls in terms of $L^2$ semi-norm.
    Right: Difference between the simulated total costs and the optimal analytical solutions.
    See \Cref{sec:comparison_to_direct}
    }%
    \label{fig:direct_compare}.
\end{figure}

Despite the fact that their gradients are related,  the HMP algorithm and the gradient descent algorithm are still quite different. Numerically, the difference can be seen in 
Figures~\ref{fig:direct_performance} and \ref{fig:direct_compare}. 
Conceptually, for general forms of the dynamics and cost functional, the update direction in the HMP algorithm based on the stronger minimization conditions (\cref{eq:minimal_condition}) is not parallel to the gradient direction. As pointed out in \cite{discontinuous_system,hu2019difftaichi}, the gradient of the discrete problem might be incorrect (in the sense that it does not converge to that of the continuous problem).
In some cases (like the examples considered in this paper) where the minimization direction coincides with the gradient direction, due to the discontinuities, the numerical approximation of the Fr\'echet derivatives $H_{u}$ (of the HMP algorithm, see Remark \ref{rem:hu_implicit} and \cref{eq:hu_dis}) are different from the gradients of the discrete problems (of the direct method, see \cref{eq:opt_problem_dis_col}).
Another point worth noting is that in the HMP algorithm, we can stabilize the numerical computations of $H_{u}$ (\eg \cref{eq:control_resolve_steepest,eq:control_resolve_stable} in the discretization of the backward dynamics) from the continuous point of view. 
As discussed in Section \ref{sec:discretization}, this is of critical importance for the dynamics with discontinuities. In contrast, it is unclear how to adopt similar stabilization techniques for the direct method.

\subsection{Comparison to deep reinforcement learning}
\label{sub:comparison_rl}
Deep reinforcement learning has demonstrated its ability in many complex tasks \citep{RL_dqn,RL_continuous_control,RL_go,RL_go_zero,RL_stack}. In this section, we reformulate the optimal control problem considered in this paper into a reinforcement learning problem and test the performance of a modern advanced deep reinforcement learning algorithm: the Proximal Policy Optimization (PPO) algorithm~\citep{ppo}.

We take the example of the concentric collision in section~\ref{sub:centric_collision}. The observation space is the time-state space $\bR^9 = (t, \bx), t\in[0,T], \bx\in\bR^8$.
The action space is the two-dimensional control whose components are scaled to $[-1, 1]$,
\[ \bar{\bu} = \frac{\bu +5 }{10}.\]
Here, we assume that each component of the actual control takes values in $[-5,5]$.
We use Stable-Baselines 3 \citep{sb3} to implement the PPO algorithm.
We use two-layer neural networks for the actor and critic networks; each layer has 64 neurons.
Each episode in the training consists of $N=60$ time steps.

As discussed previously, due to the control cost, the randomly initialized policy network will quickly converge to zero.
It is necessary to encourage exploration by using a warm-up step.
This is done by setting the running cost $L$ to zero in the early training stage.
We train the warm-up stage for $3000$ episodes.
The policy network starts converging after a series of actions that triggers a collision is exploited.
See the inset of the right subfigure of Figure \ref{fig:rl_results}.
Afterwards, we restore the running cost defined in \cref{eq:eg_running_cost} and train for another $80000$ episodes.

The convergence history of the rewards and the profile of the best control policy are shown in Figure \ref{fig:rl_results}.
We observe that the best policy is obtained at around $3000+60000$ episodes.
After that, the control network starts converging to zero.
The best policy found is still far from optimal.
The optimal total cost found by the PPO algorithm is $1.617140$, which is $15.80\%$ suboptimal to the optimal cost of $1.396542$.
Note that the optimal cost found by Algorithm \ref{alg:PMP} is $1.421552$.

\begin{figure}[htpb]
    \centering
    \includegraphics[width=0.48\linewidth]{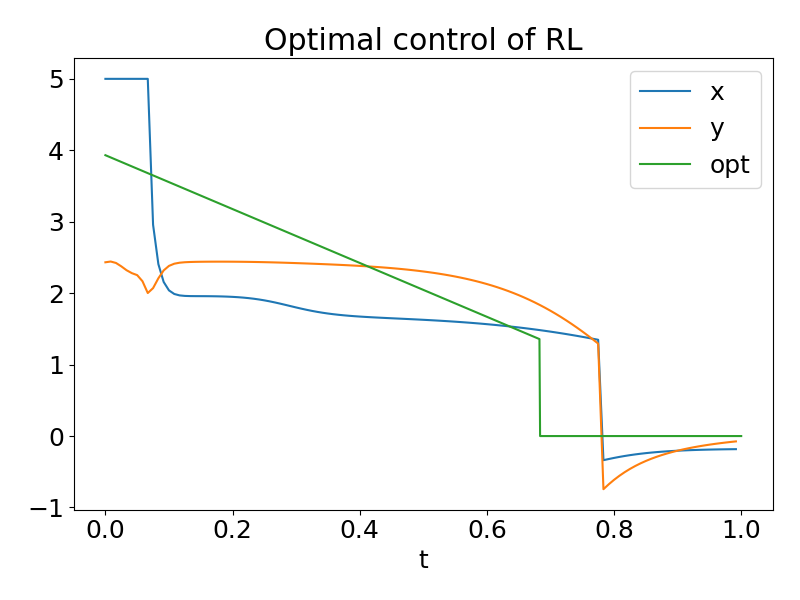}
    \includegraphics[width=0.48\linewidth]{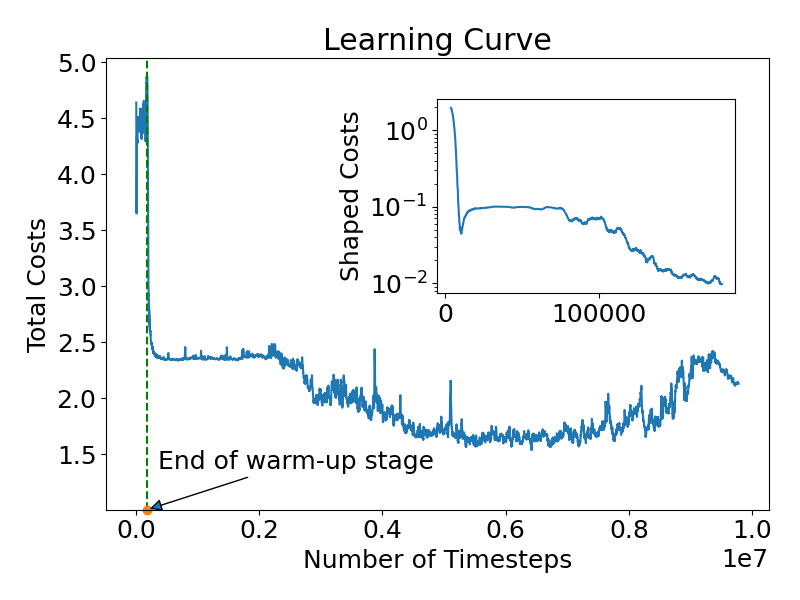}
    \caption{The performance of the deep reinforcement learning algorithm (PPO) described in \Cref{sub:comparison_rl} on the example of concentric collision. Left: the optimal control found by the PPO.
    Curves labelled with $x, y$ are the $x, y $ components of the control, respectively.
    The green curve labeled with ``opt'' is the analytical optimal control (its $x,y$ components are the same).
    Right: the evolution of the total cost versus training steps.
    The modified costs (with running costs dropped) are plotted for the warm-up stage in the inset plot at log scale. 
}%
    \label{fig:rl_results}
\end{figure}

\section{Summary and future work}%
\label{sec:discussions_and_future_work}
This paper presents a numerical algorithm for finding the optimal control of dynamical systems with autonomous state jumps. 
We consider the dynamical system that models the collisional rigid body dynamics, a critical component of many practical applications. 
We suggest various numerical techniques to mitigate the effects of discontinuities.
We test the proposed algorithm on several examples in which the optimal controls drive the discs to collide in order to achieve the lowest total cost.
Comparisons to HMP-MAS, the direct method, and deep reinforcement learning demonstrate the superiority of the proposed algorithm in terms of accuracy, efficiency, and flexibility.
Implemented with the forward-Euler scheme, the algorithm shows linear convergence with respect to the iteration steps and asymptotic first-order accuracy in terms of time discretization.

The proposed method can be extended in a few directions.

\paragraph*{Complex hybrid systems}
Generally, rigid bodies often possess more intricate shapes. These forms introduce additional challenges in computing the collision detection functions $\psi$, their derivatives, and the jump functions $g$. Additionally, angular factors like orientation, angular velocity, and torques are often essential for representing the system. Fortunately, these elements can be completely characterized within the framework presented in Section \ref{sec:formulation_and_assumptions}. Furthermore, even complex aspects such as state constraints or high-dimensional state spaces can be addressed by the proposed methods with proper adjustments, which will be explored in future studies.

\paragraph*{Frictional dynamics}%
Frictional force is critical in many applications, from simple models like the hopper \cite{erez2011infinite} and swimmer \cite{coulom2002reinforcement},
to complex models like quadrupeds \cite{quadrupedal}  and humanoids \cite{humanoid, humanoid2}.
The frictional forces are always present together with contacts/collisions. 
Friction introduces additional discontinuities and nonlinearities into the dynamical system, beyond those caused by collisions and contacts.
They can be viewed as a set-valued function consisting of two modes; the following formula is a one-dimensional example,
\[
    f(v) \in \begin{cases}
        \{-\mu N\} \quad &\text{if } v > 0 \\
        \{\mu N\} \quad &\text{if } v < 0 \\
        [-\mu N, \mu N] \quad &\text{if } v = 0
    \end{cases}
,\] 
where $\mu$ is the frictional coefficient, $N$ is the normal contact force, and  $v$ is the relative velocity.

Frictional dynamics can also be formulated within the framework of switching systems.
The presence of frictions adds additional dependency of the discontinuities on the state, \ie the relative velocity.
Furthermore, friction is often a source of nonlinearity, as it takes values within a cone determined by the magnitude of the normal contact force, which is itself a complex function of the state.

\paragraph*{Convergence and algorithm improvement} 
This paper employs the relaxation update scheme \eqref{eq:u_relax_update} to facilitate the convergence of the MSA. However, more advanced methods could further enhance the algorithm, including performing a line search instead of using a fixed $\alpha$, or utilizing adaptive update strategies as discussed in \cite{msa2, msa3}. These approaches might prove valuable for tackling more difficult problems characterized by significantly nonlinear dynamics and intricate discontinuous behaviors. A thorough investigation of various techniques for discontinuous optimal control problems, along with their convergence analysis, is left for future work.

\paragraph*{Optimal feedback control} 
The proposed method can act as a data generator for supervised learning algorithms, enabling the training of neural networks to learn optimal feedback controllers, similar to approaches used in conventional optimal control problems \cite{nakamura2021adaptive,zhang2022initial,zhao2024offline}. For hybrid systems, however, an additional challenge arises from the significant differences between optimal controllers across the system's various modes. As a result, it may be necessary to train multiple controllers, each tailored to a specific system mode.

\bibliographystyle{plain}
\bibliography{ref}

\appendix
\section{Addressing oscillation in the vicinity of collision}
\label{app:stable_estimate_near_collision}
In the experiments, we observe that the control values oscillate significantly in proximity to the collision, deteriorating the convergence of the proposed method. 
This phenomenon is attributed to the discontinuous nature of the control and the fluctuation of simulated collision indices.
To elucidate, take a collision index $c\in\cC$.
From the forward dynamics \cref{eq:fd_case1,eq:fd_case2} and the backward dynamics \cref{eq:bw_dis_jump}, we notice a discontinuous transition in both the state $\bx$ (forward) and costate $\blambda$ (backward) between $t_c$ and $t_{c+1}$.
Then, the control values $\bu_c, \bu_{c+1}$ are updated to disparate values following \cref{eq:pmp_max_u_dis,eq:u_relax_update_dis}; $\bu_{c+1}$ update to the ``after-collision'' target.
If in the next iteration, this collision index shifts (\eg from $c$ to $c'=c+1$),
$\bu_{c'}$ (which is $\bu_{c+1}$) will update to the ``before-collison'' target.
This results in $\bu_{c+1}$ oscillating between two different values, failing to converge. 
In essence, the control values in the vicinity of collisions might alternate between two distinct targets $\bu^{opt,\pm}$, where $\bu^{opt,-}, \bu^{opt,+}$ are the optimal controls before/after a collision, respectively, leading to endless oscillation between these targets.
To circumvent this oscillation, we should rely on the control values farther from the collisions to make more stable estimations since the controls are continuous in each segment.
As the control after/before a collision is right/left continuous, we will use the control values after/before the collision to approximate $\bu_{c+\frac{1}{2}}^+/\bu_{c+\frac{1}{2}}^-$.
To be specific, when estimating $\bu_{c+\frac{1}{2}}^+,\bu_{c+\frac{1}{2}}^-$,
instead of using the control values at  $c^*+1,c^{*}$ from \cref{eq:control_resolve_steepest},
we extrapolate them by the control values in $\cI_{j,p}^+=\{c^*+1+i:i\in\{j, \cdots, p\}\}$ and $\cI_{j,p}^-=\{c^*-i:i\in\{j, \cdots, p\}\}$,
respectively, where $p\ge j\ge1$ are integers. 

We remark that choosing $p$ and the extrapolation scheme depends on the desired order of accuracy and the stability of underlying dynamics, which reflects a trade-off between stability and accuracy.
If a higher-order integration scheme is applied to solve the forward and backward dynamics,
one should apply a higher-order extrapolation method with $p>j$.
Besides, one may take larger $p, j$ if the unstable dynamics brings large fluctuations in the collision times during iterations.
We note that other than the accuracy concerns, the distances (in time $t$) to adjacent collisions prohibit selecting large $p$.

\section{Details of the Two Discs Problems}
\label{app:details_of_exp}
In this section, we describe the dynamics, costate dynamics, collision detection, and jump functions of the system of two discs studied in \Cref{sec:experiments}. 

Denote the masses of these two discs by $m_1, m_2$ and the radii by $r_1,r_2$. 
In this paper, we always take $ m_1=m_2=1, r_1=r_2=0.2$ and the coefficient of restitution $C_R=1$, \ie perfectly elastic collision.
These parameters are summarized in Table \ref{tab:example_settings}.

\begin{table}[!htpb]
    \begin{center}
    \caption{Common parameters for all experiments}
    \label{tab:example_settings}
    \begin{tabular}{ccccccc}
        \hline
        \multicolumn{2}{c}{mass} & {} &\multicolumn{2}{c}{radius} & {} & coefficient of restitution \\
        \hline
        $m_1$ & $m_2$ & & $ r_1$ & $ r_2$ & & $C_R$\\
        1 & 1 & & 0.2 & 0.2& & 1\\
        \hline
    \end{tabular}
    \end{center}
\end{table}

\paragraph*{The dynamics of two discs}
The control $\bu \in \bR^2$ denotes the force applied to disc 1.
The field $\bff$ is
\begin{equation}
    \bff(\bx, \bu) = \begin{pmatrix}
        0_4 & I_4\\
        0_4 & 0_4
    \end{pmatrix} \bx 
    + \begin{pmatrix}
        0_{4\times 2}\\
        M_1^{-1}\\
        0_{2\times 2}
    \end{pmatrix}\bu,
\end{equation}
where $I_4$ is a $4\times 4$ identity matrix and $M_1=\text{diag}(m_1, m_1)$ is the inertial matrix.
$0_k = 0_{k\times k}, 0_{m\times n}$ are zero matrices.
\paragraph*{The collision function}
The contact normal vector $\cN$ is the unit vector in the collision axis
\begin{equation}
    \label{eq:eg_contact_normal}
    \cN = \frac{\bx_{q_1} - \bx_{q_2}}{\vert\bx_{q_1} - \bx_{q_2}\vert}.
\end{equation}
We denote $\bv_1=\bx_{v_1}, \bv_2=\bx_{v_2}$ for simplicity.
Let $\bv_{i, E}, i=1,2$ be the velocities after (elastic) collision,
\begin{subequations}
    \begin{align*}
        \bv_{1,E} &= \bv_1  - \frac{2m_2(\bv_1-\bv_2)\cdot \cN}{m_1+m_2}\cN, \\
        \bv_{2,E} &= \bv_2  + \frac{2m_1(\bv_1-\bv_2)\cdot \cN}{m_1+m_2}\cN.
    \end{align*}
\end{subequations}
Then the after collision velocities are,
\begin{equation}
    \bv_{i,*} =\bv_{i,E} +(C_R - 1)(\bv_{i,E}\cdot \cN) \cN - (C_R - 1)(\bv_{\text{COM}}\cdot \cN) \cN,
\end{equation}
where $i=1,2$ and $C_R\in[0, 1]$ is the coefficient of restitution and $\bv_{\text{COM}} = (m_1\bv_1+m_2\bv_2)/(m_1+m_2)$ is the center of mass velocity for the two disc system.
The positional components of $\bfg$ are kept unchanged.
In summary, the collision function $\bfg$ is,
\begin{equation}
\label{eq:inter_disc_jump}
\bfg(\bx) = [\bx_{q_1}^T, \bx_{q_2}^T, \bv_{1,*}^T, \bv_{2,*}^T]^T.
\end{equation}
\paragraph*{The collision detection}
In the problems considered here, collisions happen when the distance between the two bodies equals 0, and tends to decrease further.
The distance between the two discs can be computed by subtracting the sum of their radii from the distance between their centers.
The distance will keep decreasing if $(\bx_{q_1} - \bx_{q_2})\cdot(\bx_{v_1} - \bx_{v_2})<0$, or equivalently, $\cN\cdot (\bx_{v_1} - \bx_{v_2}) < 0 $, recalling that $\cN$ is the contact normal defined in \cref{eq:eg_contact_normal}.
Therefore, we can take
\begin{equation}
\label{eq:inter_disc_collision_detection}
\begin{split}
    \psi(\bx)
    =\frac{\sqrt{2}}{2}\times
    \begin{cases}
        \vert\bx_{q_1}-\bx_{q_2}\vert - r_1 - r_2\quad&\text{if }\cN\cdot(\bx_{v_1} - \bx_{v_2})<0,\\
        1\quad &\text{elsewhere}.
    \end{cases}
\end{split}
\end{equation}
Note that though $\psi$ is not continuous in $\bR^8$, it is smooth in the following open set of interest with some $0<\xi<r_1+r_2$
\[\{\bx\in\bR^8:(\bx_{q_1}-\bx_{q_2})\cdot(\bx_{v_1} - \bx_{v_2})<0, \vert\bx_{q_1} - \bx_{q_2}\vert>\xi\}.\]
For $\bx$ in this set, 
\[
\psi_{x}(\bx) = \frac{\sqrt{2}}{2}[\cN, -\cN, 0_2, 0_2],
\]
where $0_2\in\bR^2$ is the zero vector.
We note that the dot product 
$\sqrt{2}\psi_{x}  \bff = \cN\cdot (\bx_{v_1} - \bx_{v_2})$ is the relative velocity, whose negativity implies that $\eta$ can be solved from \cref{eq:backward_eta}.

\paragraph*{The Hamiltonian}
The Hamiltonian for the problem (except Section \ref{sub:L1-cost}) is given by
\begin{equation}
    \label{eq:eg_hamiltonian}
    H(\bx,\blambda, \bu) =  \blambda_{q_1}\cdot \bx_{v_1}+\blambda_{q_2}\cdot \bx_{v_2} + \frac{1}{m_1} \blambda_{v_1}\cdot \bu + \epsilon\vert\bu\vert^2.
\end{equation}
Thus, the direct minimizer of $H$ is
\begin{equation}
    \label{eq:eg_u_minimizer}
    \bu_* = \arg\min_{\bv \in U} H(\bx,\blambda,\bv) = -\frac{\blambda_{v_1}}{2\epsilon},
\end{equation}
and $H_{u}$ is
\begin{equation}
    \label{eq:eg_h_u}
    H_{u}(\bx, \blambda, \bu) = \blambda_{v_1} + 2\epsilon \bu.
\end{equation}

\subsection{Wall Disc Collision}
\label{app:wall_disc_colllision}
In this section, in addition to the previous collision detection function and jump function defined in \cref{eq:inter_disc_collision_detection,eq:inter_disc_jump}, we describe the functions of wall-disc collision of the example in \Cref{sub:multiple_collisions}.

The wall is represented by a hyperplane, \ie a straight line in 2D, whose distance to the origin $O$ is $b$. We denote the normal vector of the wall by $\cN_w$ such that $\cN_w\cdot OP >0$ for any point $P$ on the wall.
Without loss of generality, we assume the two discs are on the same side of the wall as the origin $O$, which means the distance from disc $i$ to the wall, $b - \cN_w\cdot\bx_{q_i}  - r_i$, is positive.
The distance will decrease if $\cN_w\cdot\bx_{v_i}  > 0$, \ie the velocity of disc $i$ projected to the wall normal is positive.
The collision detection function of the wall to disc $i$ can be defined as,
\begin{equation*}
    \psi_i(\bx) = 
    \begin{cases}
        b - \bx_{q_i}\cdot \cN_w - r_i\quad&\text{if }\cN_w\cdot \bx_{v_i}>0\\
        1\quad &\text{elsewhere}
    \end{cases}.
\end{equation*}
The corresponding jump function $\bfg_i$ will only change the velocity of disc $i$ as
\begin{equation*}
    [\bfg_i(\bx)]_{v_i} = \bx_{v_i} - 2(\bx_{v_i}\cdot \cN_w) \cN_w,
\end{equation*}
while all the other components of $\bfg_i(\bx)$ remain the same as that of $\bx$.

\subsection{Analytical optimal controls}%
\label{sub:analytical_optimal_controls}
The common parameters are summarized in Table \ref{tab:example_settings}.
In the examples considered, the optimal control should drive disc 1 to collide with disc 2 so that disc 2 is close to the origin at the terminal time $T=1$.
Recall that we always assign zero initial velocity to disc 2.
Let us assume that disc 1 collides with disc 2 with velocity $\bv'$ at the collision time $s$. 
The after collision velocity of disc 2 is $\bv=(\bv' \cdot \cN)\cN$, see Section \ref{sec:experiments}.
The displacement of disc 1 before collision is a linear function of $\cN$,
\begin{align}
    \bd &= \bq_1(s) - \bq_1(0) = \bq_2(s) - (r_1+r_2)\cN - \bq_1(0) \nonumber \\
      &= \bq_2(0) - \bq_1(0) - (r_1+r_2)\cN.
\end{align}
Here, $ \bq_1, \bq_2 $ are the positions of disc 1, disc 2, respectively.
Given the collision time $s$, the collision velocity $\bv'$ and the contact normal $\cN$, the optimal control $\bu$ before collision can be obtained by solving
\begin{mini!}|l| {u}{\int_{0}^{s}L(\bx, \bu) dt} 
    {\label{eq:opt_reformulate}}{}  
    \addConstraint{\int_{0}^{s}\bu(t)~dt=\bv' }
    \addConstraint{\int_{0}^{s}(s-t)\bu(t)~dt=\bd.}
\end{mini!}
The optimal control after collision is identically zero.
In the case $L(\bx, \bu) = \epsilon \vert\bu\vert^2$, the optimal control before collision is
\begin{equation}
    \bu(t) = \frac{6\bd}{s^2} - \frac{2\bv'}{s} + ( \frac{6\bv'}{s^2} - \frac{12\bd}{s^3}  )t.
\end{equation}
Then, the total cost is
\begin{equation}
\label{eq:reduced_obj}
    \begin{split}
         J(\bu)=\norm{\bq_2(s) + (T-s)(\bv'\cdot \cN)\cN}^2+\epsilon(\frac{12}{s^3} \vert\bd\vert^2 + \frac{4}{s} \vert\bv'\vert^2 - \frac{12}{s^2} \bv'\cdot \bd).
    \end{split}
\end{equation}
Recall that $\bd$ is a linear function of $\cN$ and $ \bq_2(s) = \bq_2(0)$. The objective \ref{eq:reduced_obj} depends only on $s, \bv'$ and $ \cN$.

\paragraph*{Example \ref{sub:centric_collision}}
In the configuration described in Section \ref{sub:centric_collision} where the concentric collision is preferred, the optimal $\cN = \frac{1}{\sqrt{2}}(1, 1)$, and the optimal $\bv$ takes the form $\bv=(v_c, v_c), v_c\in\bR$.
Then the total cost \ref{eq:reduced_obj} is quadratic in $v_c$, whose optimal value can be analytically solved as a function of $s$.
Finally, we turn the total cost into a rational function of $s$, whose minimizer in $(0, T)$ can be computed using an analytical solution
to machine accuracy.

\paragraph*{Example \ref{sub:general_collision_for_two_balls}}
In the the general situation described in Section \ref{sub:general_collision_for_two_balls}, one cannot solve the optimal contact normal vector easily. Note that the total cost \eqref{eq:reduced_obj} is quadratic in $\bv'$, we can first minimize it with respect to $\bv'$. To this end, we set the gradient of the total cost with respect to $\bv'$ to zero to have
\begin{equation}
    \bv'\cdot \cN^\perp = \frac{3\bd\cdot \cN^\perp}{2s}, \bv'\cdot \cN = -\frac{s^2(T-s)\bq_2(0) \cdot \cN - 6\epsilon \bd\cdot \cN}{4\epsilon s + s^2(T-s)^2},
\end{equation}
where $\cN^\perp=\big(\begin{smallmatrix}0 & -1\\ 1 & 0 \end{smallmatrix}\big)\cN$ is obtained by rotating $\cN$ 90 degrees.
Then, the total cost \eqref{eq:reduced_obj} is rational in  $s$ and is a fourth-order polynomial in the coordinates of $\cN$. The constraints are $\vert\cN\vert^2=1$ and $s\in(0, T)$.
Therefore, minimization of the total cost can be solved to machine accuracy using an analytical solution.

\paragraph*{Example \ref{sub:multiple_collisions}}
In the multiple collisions example described in Section \ref{sub:multiple_collisions}, 
the velocity of a disc with velocity $\bv=(\bv_x,\bv_y)$ becomes $(\bv_x, -\bv_y)$ after a collision with the wall $y=b$.
Let us denote the velocity of disc 2 after the collision with disc 1 by $\bv = (\bv_x, \bv_y)$.
It takes time  $(b-r_2-\bq_{2,y}(s)) / \bv_y$ for disc 2 to collide with the wall, where $\bq_{2,y}(s)$ is the $y$ coordinate of the position of disc 2 at time $s$.
Thus, the terminal cost is
\begin{equation*}
\begin{split}
     &\norm{\bq_{2,x}(s)+\bv_x(T-s)}^2
     + \norm{(b-r_2) - \bv_y(T-s-\frac{b-r_2-\bq_{2,y}(s)}{\bv_y})}^2 \\
    =& \norm{\bq_{2,x}(s)+\bv_x(T-s)}^2 + \norm{\bq_{2,y}(s)+(T-s)\bv_y - 2(b-r_2)}^2,
\end{split}
\end{equation*}
namely, the square distance to the point $(0, 2(b-r_2))$ as if there is no wall.
The optimal control for this case can be computed using the same procedure as in the previous examples.

\section{Consolidation of Multiple Collision Detection Functions}
\label{app:merge_multiple_conllision}
In the system outlined in \Cref{sec:formulation_and_assumptions}, we introduced a single scalar collision detection function $\psi$ and its associated jump function $\bfg$.
However, applications with multiple collisions intuitively provide multiple collision detection functions, \eg each corresponding to a pair of potentially colliding rigid bodies. See \Cref{sub:multiple_collisions} for an example.
To elaborate, one assume a vector-valued detection function $\bpsi^* = [\psi_1, \cdots, \psi_{n_c}]^T: \bR^m\rightarrow\bR^{n_c}$, where $n_c$ is the number of possible colliding rigid body pairs.
Each element $\psi^*_i$ is dedicated to detecting collisions for a distinct pair of rigid bodies.
Correspondingly, there are also $n_c$ associated jump functions $\bfg^*_1, \cdots, \bfg^*_{n_c}$. We assume $\psi^*_i$ and $\bfg^*_i$ are smooth.
Next, we show how this vector-valued approach can be transformed into the scalar-valued formulation introduced in \Cref{sec:formulation_and_assumptions}, under the assumption of isolated collisions, \ie at most one collision occurs at any given time.

To this end, we let $\mathcal{K}_i = \{\bx: \psi_i^*(\bx) = 0\}, i=1,\ldots ,n_c$  and $B(\bx,\delta)$ be the open ball with radius $\delta$ centered at $\bx$.
The domain of interest is given by
\begin{equation*}
    \cX=\left\{\bx:
    \begin{array}{cc}
     \exists\delta>0,\,i\in\{1,\dots,n_c\} \text{ such that } \\
    B(\bx, \delta) \cap \mathcal{K}_i\ne\emptyset, B(\bx, \delta) \cap \mathcal{K}_j=\emptyset \text{ for }j \neq i
    \end{array}
    \right\}.
\end{equation*}
One can think of this set as follows. We cover the isolated collision states, the union of all jumping manifolds $\mathcal{K}_i$ subtracting their pairwise intersections, by open balls intersecting precisely one of those manifolds.
According to the definition, for any $\bx\in\cX$, we can define the unique active index $i(\bx)$ such that the only possible collision in the neighborhood of $\bx$ is on $\mathcal{K}_{i(\bx)}$.
We then define, for $\bx\in\cX$,
\begin{subequations}
    \begin{align}
    \psi(\bx) &= \psi^*_{i(\bx)}(\bx),\\
    \bfg(\bx) &= \bfg^*_{i(\bx)}(\bx).
    \end{align}
\end{subequations}
$\psi$ and $\bfg$ are smooth in $\cX$ since for every $\bx\in\cX, i(\bx)$ is constant in a ball containing $\bx$.

Above is a rigorous derivation, in numerical simulations, however, we can straightforwardly iterate over each of $\psi_i$ to find out the active index.
Actually, let us further assume the existence of a positive gap $\xi>0$ such that each time interval shorter than $\xi$ contains at most one collision.
We then can go through the prediction of collision time for each $\psi_i, i=1,\ldots,n_c$, and let the active index be the one on which the prediction is smaller than $\Delta t$, if there is one. 
There will be at most one such index if $N$, the number of time discretization, is greater than $T/\xi$ (might be multiplied by a factor if taking into account the error in estimating collision times).

\end{document}